\newlist{Case}{enumerate}{2} 
\setlist[Case, 1]{
	label=\textbf{Case \arabic*.}, 
	leftmargin=4em,
}
\setlist[Case, 2]{
	label=\textbf{Subcase \arabic{Casei}.\arabic*}, 
	leftmargin=6em,
}
\setlist[Case, 3]{
	label=\textbf{Subcase \arabic{Casei}.\arabic*}, 
	leftmargin=6em,
}
\setlist[Case, 4]{
	label=\textbf{Subcase \arabic{Casei}.\arabic*}, 
	leftmargin=6em,
}
\def\lijntje{\vrule height2.4pt depth-2pt width0.5in}
\def\vlijntje{\vrule height0.45in depth0.4pt width0.4pt}
\def\vlijn{\buildrel {\hbox to 0pt{\hss$\textstyle\circ$\hss}}\over\vlijntje}
\def\dlijntje{{\vrule height2pt depth-1.6pt
		width0.5in}\llap{\vrule height4pt depth-3.6pt width0.5in}}
\def\tlijntje{{\vrule height1.7pt depth-1.3pt
		width0.5in}\llap{\vrule height3.0pt depth-2.6pt width0.5in}\llap{\vrule height4.3pt depth-3.9pt width0.5in}
}
\def\vtriple#1\over#2\over#3{\mathrel{\mathop{\kern0pt #2}\limits_{\hbox
			to 0pt{\hss$#1$\hss}}^{\hbox to 0pt{\hss$#3$\hss}}}}
\def\rvtriple#1\over#2\over#3{\mathrel{\mathop{\kern0pt #2}\limits_{\hbox
			to 0pt{\hss$#3$\hss}}^{\hbox to 0pt{\hss$#1$\hss}}}}
\def\Ct{\vtriple{\scriptstyle 2}\over\circ\over{}
	\kern-1pt\lijntje\kern-1pt\vtriple{\scriptstyle 1}\over\circ\over{}
	\kern-4pt{\dlijntje \kern -25pt<}\kern8pt
	\vtriple{\scriptstyle 0}\over\circ\over{}\kern-1pt
}
\def\Bt{\vtriple{\scriptstyle 2}\over\circ\over{}
	\kern-1pt\lijntje\kern-1pt\vtriple{\scriptstyle 1}\over\circ\over{}
	\kern-4pt{\dlijntje \kern -25pt>}\kern8pt
	\vtriple{\scriptstyle 0}\over\circ\over{}\kern-1pt}
\def\det{{\rm det}}
\newcommand{\C}{\mathbb C}
\def\Dm{\vtriple{\scriptstyle n+1}\over\circ\over{}\kern-1pt\lijntje\kern-1pt
	\vtriple{\scriptstyle{n}}\over\circ\over{}
	\cdots\cdots\vtriple{\scriptstyle 4}\over\circ\over{}\kern-1pt\lijntje\kern-1pt
	\vtriple{\scriptstyle 3}\over\circ\over{\buildrel
		{\scriptstyle 2}\over\vlijn}\kern-1pt\lijntje\kern-1pt
	\vtriple{1}\over\circ\over{}\kern-1pt}
\def\Dn{\vtriple{\scriptstyle n}\over\circ\over{}\kern-1pt\lijntje\kern-1pt
	\vtriple{\scriptstyle{n-1}}\over\circ\over{}
	\cdots\cdots\vtriple{\scriptstyle 4}\over\circ\over{}\kern-1pt\lijntje\kern-1pt
	\vtriple{\scriptstyle 3}\over\circ\over{\buildrel
		{\scriptstyle 2}\over\vlijn}\kern-1pt\lijntje\kern-1pt
	\vtriple{1}\over\circ\over{}\kern-1pt}
\def\En{\vtriple{\scriptstyle n}\over\circ\over{}\kern-1pt\lijntje\kern-1pt
	\vtriple{\scriptstyle{n-1}}\over\circ\over{}
	\cdots\cdots\vtriple{\scriptstyle 5}\over\circ\over{}\kern-1pt\lijntje\kern-1pt
	\vtriple{\scriptstyle 4}\over\circ\over{\buildrel
		{\scriptstyle 2}\over\vlijn}\kern-1pt\lijntje\kern-1pt
	\vtriple{\scriptstyle 3}\over\circ\over{}\kern-1pt\lijntje\kern-1pt
	\vtriple{\scriptstyle 1}\over\circ\over{}\kern-1pt}
\def\An{\vtriple{\scriptstyle n}\over\circ\over{}\kern-1pt\lijntje\kern-1pt
	\vtriple{\scriptstyle{n-1}}\over\circ\over{}\kern-1pt\lijntje\kern-1pt
	\vtriple{\scriptstyle n-2}\over\circ\over{}
	\cdots\cdots
	\vtriple{\scriptstyle 2}\over\circ\over{}\kern-1pt\lijntje\kern-1pt
	\vtriple{\scriptstyle 1}\over\circ\over{}\kern-1pt}
\def\Cn{\vtriple{\scriptstyle n-1}\over\circ\over{}
	\kern-1pt\lijntje\kern-1pt\vtriple{\scriptstyle{n-2}}\over\circ\over{}
	\cdots\cdots
	\vtriple{\scriptstyle 2}\over\circ\over{}
	\kern-1pt\lijntje\kern-1pt\vtriple{\scriptstyle 1}\over\circ\over{}
	\kern-4pt{\dlijntje \kern -25pt<}\kern10pt
	\vtriple{\scriptstyle 0}\over\circ\over{}\kern-1pt}
\def\Ct{\vtriple{\scriptstyle 2}\over\circ\over{}
	\kern-1pt\lijntje\kern-1pt\vtriple{\scriptstyle 1}\over\circ\over{}
	\kern-4pt{\dlijntje \kern -25pt<}\kern12pt
	\vtriple{\scriptstyle 0}\over\circ\over{}\kern-1pt
}
\def\Bn{\vtriple{\scriptstyle n-1}\over\circ\over{}
	\kern-1pt\lijntje\kern-1pt\vtriple{\scriptstyle{n-2}}\over\circ\over{}
	\cdots\cdots
	\vtriple{\scriptstyle 2}\over\circ\over{}
	\kern-1pt\lijntje\kern-1pt\vtriple{\scriptstyle 1}\over\circ\over{}
	\kern-4pt{\dlijntje \kern -25pt>}\kern10pt
	\vtriple{\scriptstyle 0}\over\circ\over{}\kern-1pt}
\def\Bt{\vtriple{\scriptstyle 2}\over\circ\over{}
	\kern-1pt\lijntje\kern-1pt\vtriple{\scriptstyle 1}\over\circ\over{}
	\kern-4pt{\dlijntje \kern -25pt>}\kern12pt
	\vtriple{\scriptstyle 0}\over\circ\over{}\kern-1pt}
\def\Es{\vtriple{\scriptstyle 6}\over\circ\over{}\kern-1pt\lijntje\kern-1pt
	\vtriple{\scriptstyle 5}\over\circ\over{}\kern-1pt\lijntje\kern-1pt
	\vtriple{\scriptstyle 4}\over\circ\over{\buildrel
		{\scriptstyle 2}\over\vlijn}\kern-1pt\lijntje\kern-1pt
	\vtriple{3}\over\circ\over{}\kern-1pt\lijntje\kern-1pt
	\vtriple{\scriptstyle 1}\over\circ\over{}\kern-1pt}
\def\Ff{
	\vtriple{\scriptstyle 1}\over\circ\over{}
	\kern-1pt\lijntje\kern-1pt\vtriple{\scriptstyle 2}\over\circ\over{}
	\kern-4pt{\dlijntje \kern -25pt<}\kern10pt
	\vtriple{\scriptstyle 3}\over\circ\over{}\kern-1pt\lijntje\kern-1pt
	\vtriple{\scriptstyle 4}\over\circ\over{}
	\kern-1pt}
\def\Ht{
	\vtriple{\scriptstyle 1}\over\circ\over{}
	\kern-1pt\overset{5}{\lijntje}\kern-1pt\vtriple{\scriptstyle 2}\over\circ\over{}
	\kern-1pt\lijntje\kern-1pt
	\vtriple{\scriptstyle 3}\over\circ\over{}\kern-1pt}
\def\Hf{
	\vtriple{\scriptstyle 1}\over\circ\over{}
	\kern-1pt\overset{5}{\lijntje}\kern-1pt\vtriple{\scriptstyle 2}\over\circ\over{}
	\kern-1pt\lijntje\kern-1pt
	\vtriple{\scriptstyle 3}\over\circ\over{}\kern-1pt\lijntje\kern-1pt
	\vtriple{\scriptstyle 4}\over\circ\over{}
	\kern-1pt}
\def\In{
	\vtriple{\scriptstyle 0}\over\circ\over{}
	\kern-1pt\overset{n}{\lijntje}\kern-1pt\vtriple{\scriptstyle 1}\over\circ\over{}
	\kern-1pt}
\def\Gt{
	\vtriple{\scriptstyle 0}\over\circ\over{}
	\kern-4pt{\tlijntje\kern -25pt<}\kern 10pt\vtriple{\scriptstyle 1}\over\circ\over{}
	\kern-1pt}
\def\EBn{\vtriple{\scriptstyle n-1}\over\circ\over{}
	\kern-1pt\lijntje\kern-1pt\vtriple{\scriptstyle{n-2}}\over\circ\over{\buildrel
		{\scriptstyle -1}\over\vlijn}\cdots\cdots
	\vtriple{\scriptstyle 2}\over\circ\over{}
	\kern-1pt\lijntje\kern-1pt\vtriple{\scriptstyle 1}\over\circ\over{}
	\kern-4pt{\dlijntje \kern -25pt<}\kern8pt
	\vtriple{\scriptstyle 0}\over\circ\over{}\kern-1pt}
\def\Cn{\vtriple{\scriptstyle n-1}\over\circ\over{}
	\kern-1pt\lijntje\kern-1pt\vtriple{\scriptstyle{n-2}}\over\circ\over{}
	\cdots\cdots
	\vtriple{\scriptstyle 2}\over\circ\over{}
	\kern-1pt\lijntje\kern-1pt\vtriple{\scriptstyle 1}\over\circ\over{}
	\kern-4pt{\dlijntje \kern -25pt<}\kern10pt
	\vtriple{\scriptstyle 0}\over\circ\over{}\kern-1pt}
\def\ECn{\vtriple{\scriptstyle -2}\over\circ\over{}
	\kern-4pt{\dlijntje \kern -25pt>}\kern8pt\vtriple{\scriptstyle n-1}\over\circ\over{}
	\kern-1pt\lijntje\kern-1pt\vtriple{\scriptstyle{n-2}}\over\circ\over{}
	\cdots\cdots
	\vtriple{\scriptstyle 2}\over\circ\over{}
	\kern-1pt\lijntje\kern-1pt\vtriple{\scriptstyle 1}\over\circ\over{}
	\kern-4pt{\dlijntje \kern -25pt<}\kern12pt
	\vtriple{\scriptstyle 0}\over\circ\over{}\kern-1pt}
\def\Fo{\vtriple{\scriptstyle -1}\over\circ\over{}
	\kern-1pt\lijntje\kern-1pt
	\vtriple{\scriptstyle 1}\over\circ\over{}
	\kern-1pt\lijntje\kern-1pt\vtriple{\scriptstyle 2}\over\circ\over{}
	\kern-4pt{\dlijntje \kern -25pt<}\kern8pt
	\vtriple{\scriptstyle 3}\over\circ\over{}\kern-1pt\lijntje\kern-1pt
	\vtriple{\scriptstyle 4}\over\circ\over{}
	\kern-1pt}
\def\Ft{
	\vtriple{\scriptstyle 1}\over\circ\over{}
	\kern-1pt\lijntje\kern-1pt\vtriple{\scriptstyle 2}\over\circ\over{}
	\kern-4pt{\dlijntje \kern -25pt<}\kern8pt
	\vtriple{\scriptstyle 3}\over\circ\over{}\kern-1pt\lijntje\kern-1pt
	\vtriple{\scriptstyle 4}\over\circ\over{}
	\kern-1pt\lijntje\kern-1pt
	\vtriple{\scriptstyle -2}\over\circ\over{}
	\kern-1pt}
\def\Go{\vtriple{\scriptstyle -1}\over\circ\over{}
	\kern-1pt\lijntje\kern-1pt
	\vtriple{\scriptstyle 0}\over\circ\over{}
	\kern-4pt{\tlijntje\kern -25pt<}\kern 12pt\vtriple{\scriptstyle 1}\over\circ\over{}
	\kern-1pt}
\def\Gf{
	\vtriple{\scriptstyle 0}\over\circ\over{}
	\kern-4pt{\tlijntje\kern -25pt<}\kern 12pt\vtriple{\scriptstyle 1}\over\circ\over{}
	\kern-1pt\lijntje\kern-1pt
	\vtriple{\scriptstyle -2}\over\circ\over{}
	\kern-1pt}
\numberwithin{equation}{section}
\newtheorem{lemma}{Lemma}[section]
\newtheorem{cor}[lemma]{Corollary}
\newtheorem{prop}[lemma]{Proposition}
\newtheorem{thm}[lemma]{Theorem}
\theoremstyle{remark}
\newtheorem{rem}[lemma]{Remark}
\theoremstyle{definition}
\newtheorem{defn}[lemma]{Definition}
\begin{document}
	\title{The characteristic polynomials of imprimitive groups and affine Coxeter groups  }
	\author{ Chenyue Feng, Shoumin Liu\footnote{The author is  funded by the NSFC (Grant No. 11971181, Grant No.12271298) },   Xumin Wang}
	\date{}
	\maketitle
	\begin{abstract}
		In this paper, we will seek appropriate generators to define the characteristic polynomials of $G(r,1,n)$, and prove that for every finite dimensional representation of $G(r,1,n)$, the characteristic polynomial of $G(r,1,n)$ determines the character of this representation. Furthermore, the same conclusion holds for affine Coxeter groups \(\widetilde{W}\) and $G(r,p,n)$.
		\par\textbf{Keywords:} \ Pseudo reflection groups; Imprimitive groups; affine Coxeter groups; Characteristic polynomials; Finite dimensional representations	
	\end{abstract}

	\section{Introduction}

\hspace{1.5em}Given a finite group $G = \{1, g_1, \cdots, g_n\}$ and $\lambda_G$ being the left regular representation of $G$, Dedekind studied the determinant
	\begin{equation}\label{1.1}
		f_{\lambda_G}(z) = det(x_0I + x_1\lambda_G(g_1) + \cdots + x_n\lambda_G(g_n))
    \end{equation}
for some groups and  its  factorizations with $z=(x_0, x_1, \dots, x_n)$ in \cite{D1969}.
It is natural that  one may define the corresponding polynomial $f_{\pi}(z)$ associated to a general finite dimensional representation $\pi$ of $G$. Let $\hat{G}$ be the set of equivalence classes of irreducible unitary representations of $G$, and $d_{\pi}$ be the dimension of the representation $\pi$. Later in 1896 Frobenius proved the following theorem in \cite{F1896}.

\begin{thm}\label{0}
	If $G = \{1, g_1, \cdots, g_n\}$ is a finite group, then
	$$f_{\lambda_G}(z)=\prod_{\pi\in\hat{G}}(f_{\pi}(z))^{d_\pi} .$$
	Moreover, each $f_{\pi}(z)$ is an irreducible polynomial of degree $d_{\pi}$.
\end{thm}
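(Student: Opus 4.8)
The plan is to separate the two assertions: first the factorization $f_{\lambda_G}(z)=\prod_{\pi\in\hat G}(f_\pi(z))^{d_\pi}$, which is essentially formal, and then the irreducibility together with the degree count, which is the substantial part. For the factorization I would start from the decomposition of the left regular representation into its isotypic components,
$$\lambda_G \;\cong\; \bigoplus_{\pi\in\hat G} d_\pi\,\pi,$$
in which each irreducible $\pi$ occurs with multiplicity equal to its dimension $d_\pi$. Choosing a basis of $\C[G]$ adapted to this decomposition amounts to conjugating the matrix $x_0 I+\sum_i x_i\lambda_G(g_i)$ by one fixed invertible matrix, independent of $z$, bringing it to block-diagonal form with $d_\pi$ identical blocks equal to $x_0 I+\sum_i x_i\pi(g_i)$ for each $\pi$. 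Since the determinant is invariant under conjugation and multiplicative on block-diagonal matrices, the factorization follows at once. The same display shows $f_\pi(z)$ is homogeneous of degree $d_\pi$: its defining matrix is $d_\pi\times d_\pi$ with entries that are linear forms in $z$, so its determinant is homogeneous of degree $d_\pi$; as a consistency check, $\sum_\pi d_\pi^2=|G|=n+1=\deg f_{\lambda_G}$.

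The core of the argument is the irreducibility of each $f_\pi$. Here the key input is Burnside's theorem: since $\pi$ is irreducible over $\C$, the matrices $\{\pi(g):g\in G\}$ span the full matrix algebra $M_{d_\pi}(\C)$. Because $1\in G$ contributes $\pi(1)=I$, this says precisely that the linear map
$$\C^{\,n+1}\longrightarrow M_{d_\pi}(\C),\qquad (x_0,\dots,x_n)\longmapsto x_0 I+\sum_{i=1}^{n} x_i\pi(g_i),$$
is surjective. Consequently there is an invertible linear change of the coordinates $z$ after which $d_\pi^2$ of the new variables restrict to a coordinate system on $M_{d_\pi}(\C)$ and $f_\pi$ becomes the generic determinant $\det(X_{jk})$ of a matrix of independent indeterminates, while the remaining $n+1-d_\pi^2$ variables do not appear.

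I would then invoke the classical fact that the generic determinant is an irreducible polynomial in its $d_\pi^2$ entries, which I would prove by induction on $d_\pi$: expanding along the first row exhibits $\det$ as a polynomial of degree one in each entry $X_{1k}$ whose coefficients are, up to sign, the $(d_\pi-1)\times(d_\pi-1)$ minors, and any nontrivial factorization is then incompatible with this multilinearity together with the irreducibility of the smaller minors. Since irreducibility is preserved both under an invertible linear change of variables and under passage to a larger polynomial ring in which the extra variables are simply absent (an irreducible, hence prime, element of the UFD $\C[X_{jk}]$ stays prime after adjoining further indeterminates), the irreducibility of the generic determinant yields that of $f_\pi$, of degree $d_\pi$ as already noted.

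I expect the main obstacle to be the irreducibility of the generic determinant and, more precisely, the bookkeeping needed to transfer it back to $f_\pi$: one must check that surjectivity of the coefficient map really does turn $f_\pi$ into the generic determinant evaluated on a genuine subset of a coordinate system, and that re-enlarging to $\C[x_0,\dots,x_n]$ introduces no new factors. Granting the Wedderburn/Burnside structure theory and the irreducibility of the generic determinant, the remaining steps are formal.
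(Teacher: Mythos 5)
The paper never proves this statement: Theorem \ref{0} is quoted as Frobenius's 1896 result from \cite{F1896} and serves purely as historical motivation, so there is no in-paper argument to compare yours against. Judged on its own, your proof is the standard modern argument and is correct in all essential steps: the isotypic decomposition $\lambda_G\cong\bigoplus_\pi d_\pi\,\pi$, conjugation by a single $z$-independent matrix, and multiplicativity of $\det$ on block-diagonal matrices give the factorization; Burnside's theorem makes the coefficient map $z\mapsto x_0I+\sum_i x_i\pi(g_i)$ surjective onto $M_{d_\pi}(\C)$ (note $n+1=|G|\ge d_\pi^2$, consistent with $\sum_\pi d_\pi^2=|G|$); and your transfer of irreducibility is sound, since a linear change of coordinates is a ring automorphism and an irreducible element of the UFD $\C[X_{jk}]$ stays prime in a polynomial ring with extra variables. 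This is a genuinely different (and much shorter) route than Frobenius's original character-theoretic arguments, which predate Burnside's density theorem; what your approach buys is that all representation theory is concentrated in one citation (Burnside/Wedderburn) and the rest is commutative algebra.

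The one place you should tighten is the sketched induction for the irreducibility of the generic determinant: "incompatible with this multilinearity" glosses over possible cancellations between cross terms when you multiply hypothetical factors. The clean version of your own idea: write $\det=X_{11}M_{11}+(\text{terms without }X_{11})$; in a factorization $\det=fg$ at most one factor, say $f$, involves $X_{11}$, and comparing coefficients of $X_{11}$ gives $f_1g=\pm M_{11}$ with $M_{11}$ irreducible by induction, so $g$ is a unit or $g\sim M_{11}$; the latter is excluded because setting the first row to zero kills $\det$ but not $M_{11}$, forcing the cofactor $f$ to be a linear form in the $X_{1j}$ alone, and then comparing the coefficient of $X_{12}$ would give $\pm M_{12}=cM_{11}$, impossible since these involve different variables. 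With that step made precise, your argument is complete.
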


The set of zeros of a polynomial  $f_{\pi}(z)$ was called the group determinant of $G$, and many related
works by Dedekind and Frobenius are inspired by group representation theory to \cite{Cu2000,D1969,Di1902,Di1921,Di1975,F1896,HKV2018,KV2012} of this topic.
It was later proved by Formanek and Sibley in \cite{FS1991} that the group determinant also determines the group up to isomorphism,
showing that the group determinant itself is rich enough to make the correspondence.

Analogous to Theorem (\ref{1.1}), we consider a more general situation.
Given $n$ square $N\times N$ matrices, the determinant of their linear combination,
$det(x_1A_1+\cdots +x_nA_n)$, is called the characteristic polynomial of degree $N$ in variables $x_1,...,x_n$,  Zeros of this polynomial form an algebraic manifold in the projective space $\C{\mathbb P}^{n-1}$. This manifold is called the determinantal manifold or determinantal hypersurface for the tuple $(A_1,\dots,A_n)$. We use the following notation
$$\sigma(A_1,...,A_n)=
\Big\{[x_1:\cdots :x_n]\in \C{\mathbb P}^{n-1}: \ det(x_1A_1+\cdots +x_nA_n)=0\Big\}.$$
However, the characteristic polynomials for several general matrices are a new frontier in linear algebra. In \cite{Y2009}, the notion $\sigma(A_1,...,A_n)$ is defined as the projective spectrum of operators  by  Yang through the multiparameter pencil $x_1A_1+\cdots +x_nA_n$, and  multivariable homogeneous characteristic polynomials have been studied there.
Fruitful results have been obtained in \cite{GY2017, GR2014, HY2023, HY2024,  HY2018,S2024}.
A wealth of conclusions have been reached regarding the same topics for finite dimensional complex simple Lie algebra $\mathfrak{g}$.
There is a correspondence between finite dimensional representations of $\mathfrak{g}$ and their characteristic polynomials.
Meanwhile, the invariance and symmetry properties of the characteristic polynomial have been thoroughly investigated and well-established.
For the detailed information regarding these contents, it can be found in
\cite{CCD2019,FLW2024,GLW2024,HY2018,HZ2019,H2021,KKSW,KY2021}.

The group algebra for a group contains most of the information of its representations. Instead of all the group elements, whether taking only the appropriate generators for the group will also reflect the information of group representations and to what extent the representation information can be reflected? This is a quite natural question. Hence, analogously, we have the following definition.
\begin{defn}
		Let $G$ be a group with a finite generating set $S=\{s_1, s_2, \dots, s_n\}$ and $rep(G)$ denote the set of all finite dimensional complex linear representations of $G$. For any $\rho \in rep(G)$, the \textit{characteristic polynomial of} $\rho$ is \[d(S,\rho)=det(x_0I+x_1\rho(s_1)+\cdots+x_{n-1}\rho(s_{n-1})+x_n\rho(s_n)).\]
\end{defn}

When $G$ is a finite Coxeter group, in \cite{CS2021}, \v{C}u\v{c}kovi\v{c}, Stessin and Tchernev
 show that if $\rho$ is the left regular representation, then
$D(S,\rho)$ determines the isomorphism class of $G$.
 If $G$ is not of exceptional types,
and  $\rho$ is any finite dimensional representation, then
$D(S,\rho)$ determines $\rho$. Peebles and Stessin came to the same conclusion about affine Coxeter groups of type $\widetilde{A}$ in \cite{PS2024}.
This result was established by constructing an Abelian normal subgroup of the canonical Coxeter affine group \(\tilde{A}_n\), with the core issue lying in the choice of generators for the affine Coxeter group. It is well-known that the affine Coxeter group \(\widetilde{W}\) can be regarded as a semidirect product $W \ltimes T$ of the root lattice $T$ and the corresponding finite Coxeter group $W$. By constructing the generating set \(A = \{g_1, \ldots, g_k, \alpha_1, \ldots, \alpha_n, \alpha_1^{-1}, \ldots, \alpha_n^{-1}\}\) through this structure, we will show that the same conclusion holds for the affine Coxeter groups of all irreducible types.

The sketch of this paper is as follows.
	Preliminary results of imprimitive group $G(r,1,n)$ are presented in Section 2 for preparations. In Section 3, a correspondence between the characteristic polynomials and the representations of $G(r,1,n)$ is established through the echelon form. In Section 4, we define the characteristic polynomials for $\widetilde{W}$ and $G(r,p,n)$, and obtain the similar conclusion.

	\section{Signatures of words in $G(r,1,p)$}
	\hspace{1.3em}we first present the definition of $G(r,1,p)$ from \cite{L2017}.

   \begin{defn}The imprimitive group $G(r,1,n)$ is defined with generators $s_1,\cdots,s_{n-1},t_1,\cdots,t_n$, subject to the following relations.
   	\begin{flushleft}
   		\begin{enumerate}[label=(\alph*).]
   	\item	$s_is_j=s_js_i,\,\lvert i-j\rvert >1,$
   	\item	$s_is_{i+1}s_i=s_{i+1}s_{i}s_{i+1},\,i\le {n-2},$
   	\item	$s_{n-1}t_ns_{n-1}t_n=t_ns_{n-1}t_ns_{n-1},$
   	\item	$s_i t_n=t_n s_i,\,i \le {n-2},$
   	\item	$s_i^2=1=t_j^r,i=1,\cdots,n-1,\,j=1,\cdots,n,$
   	\item	$t_i=s_i\cdots s_{n-1}t_n s_{n-1} \cdots s_i, i \le {n-1}.$	
   	\end{enumerate}
   \end{flushleft}
  \end{defn}
   According to the defining relations, it is not difficult to verify the follow proposition, which can be used in the below.
   \begin{prop}
   	Let $1 \le i \le {n-1}$ and $1 \le k\le n$. In $G(r,1,p)$, we can have the following equations.
   	\begin{flushleft}
   	\begin{enumerate}[label=(\roman*).]
   		\item  $s_it_i=t_{i+1}s_i,i\le n-1,$ \label{i}
   	    \item $s_it_{i+1}=t_is_i,i\le n-1,$  \label{ii}
   		\item $s_it_k=t_ks_i, k\le i-1 \, \text{or} \,\, k\ge i+2,$ \label{(iii)}
   		\item $t_i t_j=t_j t_i.$ \label{(iv)}
   	\end{enumerate}
   	\end{flushleft}
   \end{prop}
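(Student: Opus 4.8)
The plan is to get (i) and (ii) straight from the defining relation (f), dispose of (iii) by a commutation analysis, and then deduce (iv) from the observation that the $s_i$ act on the $t_k$ by transpositions. For (i) I would substitute $t_i = s_i s_{i+1}\cdots s_{n-1} t_n s_{n-1}\cdots s_{i+1} s_i$ into $s_i t_i$ and cancel the leading $s_i^2 = 1$ using relation (e); the surviving word is exactly $s_{i+1}\cdots s_{n-1} t_n s_{n-1}\cdots s_{i+1}\, s_i = t_{i+1} s_i$, which is (i) (the case $i=n-1$ being the degenerate instance $s_{n-1}t_{n-1}=t_n s_{n-1}$). Part (ii) is then immediate: multiplying $s_i t_i = t_{i+1} s_i$ on the right by $s_i$ gives $t_i = s_i t_{i+1} s_i$, hence $t_i s_i = s_i t_{i+1}$.

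For (iii) I would split along the two ranges. When $k \ge i+2$ (including $k=n$, where $t_n$ is a generator), every letter $s_j$ in the defining word of $t_k$ has $j \ge k \ge i+2$, so $s_i$ commutes with each such $s_j$ by relation (a), while $s_i t_n = t_n s_i$ by relation (d) since $i \le n-2$; thus $s_i$ slides through the whole word and commutes with $t_k$. When $k \le i-1$, I would write $t_k = w\, t_n\, w^{-1}$ with $w = s_k s_{k+1}\cdots s_{n-1}$ (so $w^{-1} = s_{n-1}\cdots s_k$ by relation (e)); then $s_i t_k = t_k s_i$ is equivalent, after conjugating by $w$, to the assertion that $w^{-1} s_i w$ commutes with $t_n$. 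A computation with the braid relations (a), (b) gives $w^{-1} s_i w = s_{i-1}$ for $k+1 \le i \le n-1$, and since $i-1 \le n-2$ this commutes with $t_n$ by relation (d).

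For (iv) I would first record the base case $t_{n-1} t_n = t_n t_{n-1}$: since $t_{n-1} = s_{n-1} t_n s_{n-1}$, this is literally relation (c). The crucial point is that parts (i)--(iii), rewritten as $s_i t_i s_i = t_{i+1}$, $s_i t_{i+1} s_i = t_i$, and $s_i t_k s_i = t_k$ otherwise, say precisely that conjugation by $s_i$ permutes $\{t_1,\dots,t_n\}$ by the transposition $(i,\,i+1)$. Because these transpositions generate $S_n$, for any pair $i \ne j$ there is a product $\sigma$ of the $s_m$ whose conjugation sends $\{t_i,t_j\}$ to $\{t_{n-1},t_n\}$; applying the automorphism $x \mapsto \sigma x \sigma^{-1}$ to the base relation and using its injectivity then yields $t_i t_j = t_j t_i$.

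The main obstacle is the subcase $k \le i-1$ of (iii), namely establishing the conjugation identity $w^{-1} s_i w = s_{i-1}$ cleanly from the braid relations (one can verify it in the type-$A$ Coxeter quotient generated by the $s_j$, or by a short induction on the length of $w$). Everything else is a one-line cancellation, and (iv) follows formally once the picture of $S_n$ acting on the $t_k$ by permutations is in place.
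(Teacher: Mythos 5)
Your proof is correct, but it is organized genuinely differently from the paper's in parts (iii) and (iv). For (i) you match the paper verbatim, and deriving (ii) from (i) by right-multiplying by $s_i$ and using (e) is a harmless shortcut (the paper simply redoes the (i)-style computation). For the case $k \le i-1$ of (iii), the paper pushes $s_i$ step by step through the explicit word $s_k \cdots s_{i-1} s_i \cdots s_{n-1} t_n s_{n-1} \cdots s_i s_{i-1} \cdots s_k$ in a nine-line chain of applications of (a), (b), (d); you instead isolate the single conjugation identity $w^{-1} s_i w = s_{i-1}$ for $w = s_k \cdots s_{n-1}$, which is exactly the content of that chain, stated once and reused on both sides of $t_n$. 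That identity does need to be derived from the relations, but the derivation is short: $s_i$ commutes past $s_k \cdots s_{i-2}$ by (a), then $s_i s_{i-1} s_i = s_{i-1} s_i s_{i-1}$ by (b), then the new trailing $s_{i-1}$ commutes past $s_{i+1} \cdots s_{n-1}$ by (a), giving $s_i w = w s_{i-1}$. One caution about your parenthetical: verifying the identity only in the symmetric-group image of the $s_j$ would not suffice at this stage, since you do not yet know the $s_j$ generate a copy of $S_n$ inside $G(r,1,n)$; the induction with the braid relations that you also mention is the self-contained route, and it is precisely what the paper's explicit chain carries out. For (iv), the paper first proves $t_i t_n = t_n t_i$ directly from (f), (d), (c) and then reduces general $t_i t_j$ to that case by sliding $t_i$ through the defining word of $t_j$ via (iii); you instead observe that (i)--(iii), together with (e), say conjugation by $s_m$ permutes $\{t_1,\dots,t_n\}$ by the transposition $(m,\,m+1)$, take the base case $t_{n-1} t_n = t_n t_{n-1}$ (which is indeed relation (c) verbatim), and transport it using $2$-transitivity of $S_n$ on indices. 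Your version buys the conceptual wreath-product picture of $S_n$ acting on the $t_k$ by place permutation, which explains all of (i)--(iii) at a glance, at the modest cost of an induction-on-word-length lemma that conjugation by an arbitrary word in the $s_j$ acts through the associated permutation; the paper's version is longer but uses nothing beyond the rewriting moves it has already displayed, which matters later since those same moves are promoted to the admissible transformations of Section 2.
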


	\begin{proof} \ref{i} If $i \le {n-1}$,\; we have
		\[s_i t_i=s_i s_i s_{i+1}\cdots s_{n-1} t_n s_{n-1} \cdots s_{i+1} s_i =s_{i+1}\cdots s_{n-1} t_n s_{n-1} \cdots s_{i+1} s_i=t_{i+1}s_i.\]
		Similar to (i), the (ii) can be proved.
		
		 \ref{(iii)} If $k \le {i-1}$, we have
		\begin{align*}
			s_i t_k& \stackrel{(f)}{=}s_i s_k \cdots s_{i-1}s_i \cdots s_{n-1}t_n s_{n-1} \cdots s_i s_{i-1} \cdots s_k\\
			&\stackrel{(a)}{=}s_k \cdots s_i s_{i-1}s_i \cdots s_{n-1}t_n s_{n-1} \cdots s_i s_{i-1} \cdots s_k \\
			&\stackrel{(b)}{=}s_k \cdots s_{i-1} s_i s_{i-1} \cdots s_{n-1}t_n s_{n-1} \cdots s_i s_{i-1} \cdots s_k \\
			&\stackrel{(a)}{=}s_k \cdots s_{i-1} s_i  \cdots s_{n-1} s_{i-1}t_n s_{n-1} \cdots s_i s_{i-1} \cdots s_k \\
			&\stackrel{(d)}{=}s_k \cdots s_{i-1} s_i  \cdots s_{n-1} t_n s_{i-1} s_{n-1} \cdots s_i s_{i-1} \cdots s_k \\
			&\stackrel{(a)}{=}s_k \cdots s_{i-1} s_i  \cdots s_{n-1} t_n  s_{n-1} \cdots s_{i-1} s_i s_{i-1} \cdots s_k \\
			&\stackrel{(b)}{=}s_k \cdots s_{i-1} s_i  \cdots s_{n-1} t_n  s_{n-1} \cdots s_{i} s_{i-1} s_{i} \cdots s_k \\
			&\stackrel{(a)}{=}s_k \cdots s_{i-1} s_i  \cdots s_{n-1} t_n  s_{n-1} \cdots s_{i} s_{i-1}  \cdots s_k s_i \\
			&\stackrel{(f)}{=}t_k s_i.
		\end{align*}
		If $k \ge {i+2}$, we have \[s_i t_k\stackrel{(f)}{=}s_i s_k \cdots s_{n-1}t_n s_{n-1} \cdots s_k \stackrel{(a) , (d)}{=}s_k \cdots s_{n-1}t_n s_{n-1} \cdots s_k s_i \stackrel{(f)}{=}t_k s_i.\]
		 \ref{(iv)} If $i\le{n-1}$, it follows
		\begin{align*}
			t_i t_n&\stackrel{(f)}{=}s_i \cdots s_{n-1}t_n s_{n-1} \cdots s_i t_n \\
			&\stackrel{(d)}{=}s_i \cdots s_{n-1}t_n s_{n-1} t_n \cdots s_i \\
			&\stackrel{(c)}{=}s_i \cdots t_{n}s_{n-1} t_{n} s_{n-1} \cdots s_i \\
			&\stackrel{(d)}{=}t_n s_i \cdots s_{n-1} t_{n} s_{n-1} \cdots s_i \stackrel{(f)}{=}t_n t_i.
		\end{align*}
		If $i=j$, it follows $t_i t_j=t_j t_i$. Without loss of generality, assume $i<j$. Therefore,
		\begin{align*}
			t_i t_j&\stackrel{(f)}{=}t_i s_j \cdots s_{n-1}t_n s_{n-1} \cdots s_j \\
			&\stackrel{(d)}{=}s_j \cdots s_{n-1} t_i t_n s_{n-1} \cdots s_j \\
			&\stackrel{(iv)}{=}s_j \cdots s_{n-1} t_n t_i s_{n-1} \cdots s_j \\
			&\stackrel{(d)}{=}s_j \cdots s_{n-1} t_n s_{n-1} \cdots s_j t_i\stackrel{(f)}{=}t_j t_i.
		\end{align*}
		Here the second and fourth equations follows from (iii).
	\end{proof}
	Let $rep(G(r,1,n))$ denote the set of all finite dimensional complex linear representations of $G(r,1,n)$. For any $\rho \in rep(G(r,1,n))$, the \textit{characteristic polynomial of} $\rho$ is defined as \[d(S,\rho)=det(x_0I+x_1\rho(s_1)+\cdots+x_{n-1}\rho(s_{n-1})+x_n\rho(t_1)+\cdots+x_{2n-1}\rho(t_n)).\]
	In this section, we will use \textit{words} to denote the expressions of elements in a given finitely generated group $G(r,1,n)$ with $S=\{s_1,\cdots,s_{n-1},t_1,\cdots,t_n\}$ as its generating set, which means elements of the free monoid $M(S)$ generated by alphabet $S$. Here we define some notations related to words. The length of a word $\omega \in M(S)$ is the number of letters in $\omega$, denoted by $\lvert \omega \rvert$. The identity of $M(S)$ is denoted by $1$. There is a natural homomorphism \[\phi:M(S)\longrightarrow G(r,1,n),\]
	where $\phi(s_i)=s_i,\phi(t_i)=t_i$. We often just use $\omega$ to  express $\phi(\omega)$ when consider it as an element in $G(r,1,n)$ with no confusion. But note that when we say $\omega_1=\omega_2$, it always means $\omega_1$ is equal to $\omega_2$ as elements in monoid $M(S)$. If we want to express that $\omega_1$ is equal to $\omega_2$ as elements in $G(r,1,n)$, we say $\phi(\omega_1)=\phi(\omega_2)$.
	
	\begin{defn} Let $G(r,1,n)$ be the imprimitive group with $S=\{s_1,\cdots,$ $s_{n-1},t_1,\cdots,t_n\}$ as in Definition 2.1. For a word $\omega \in M(S)$, the \textit{signature} of $\omega$ is $sig(\omega)=(a_1,b_1,a_2,b_2,\cdots,a_{n-1},$ $b_{n-1},b_n)\in \mathbb{N}^{2n-1}$ where $a_i$ and $b_i$ are the number of times $s_i$ and $t_i$ appearing in $\omega$, respectively. For each signature \[\alpha=(a_1,b_1,a_2,b_2,\cdots,a_{n-1},b_{n-1},b_n),\] the length of $\alpha$ is \[\lvert \alpha \rvert=\sum_{i=1}^{n-1} a_i +\sum_{i=1}^n b_i.\]
	\end{defn}
	
	It is well known that if two representations $\rho_1,\rho_2$ of $G$ satisfy $\chi_{\rho_1}=\chi_{\rho_2}$, where $\chi_{\rho_i}$ is the character of $\rho_i$, then $\rho_1 \sim \rho_2$.
	Similar to \cite[Theorem 1.1]{CS2021} and \cite[Theorem 3.2]{PS2024}, we have the following theorem.
	More facts about linear representations of groups could be found in \cite{S1977}.

	\begin{thm} Let $G(r,1,n)$ be the imprimitive group with generating set $S=\{s_1,\cdots,s_{n-1},t_1,$ $\cdots,t_n\}$. Let $\rho_1,\rho_2 \in rep(G(r,1,n))$. If $d(S,\rho_1)=d(S,\rho_2)$, then for each given $\alpha \in \mathbb{N}^{2n-1}$, we have \[\sum_{sig(\omega)=\alpha} \chi_{\rho_1}(\omega)=\sum_{sig(\omega)=\alpha} \chi_{\rho_2}(\omega),\]
		\textit{where $\chi_{\rho_i}$ denotes the character of representation $\rho_i$, for $i=1,2$.}
	\end{thm}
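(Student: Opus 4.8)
The plan is to observe that this statement, like its analogues in \cite{CS2021,PS2024}, is a general fact about the determinant of a linear matrix pencil and uses none of the defining relations of $G(r,1,n)$: the coefficients of $d(S,\rho)$, read off as a polynomial in $x_0$, are symmetric functions of the eigenvalues of the pencil $B=\sum_{i=1}^{2n-1}x_iA_i$, and converting these to power sums $\tr(B^k)$ produces exactly the signature-summed character values.

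To set up, relabel the generators as $g_1=s_1,\dots,g_{n-1}=s_{n-1}$, $g_{n-1+j}=t_j$ for $1\le j\le n$, put $m=2n-1$ and $A_i^{(\ell)}=\rho_\ell(g_i)$, and write $B^{(\ell)}=\sum_{i=1}^m x_iA_i^{(\ell)}$ for $\ell=1,2$, so that $d(S,\rho_\ell)=\det\!\big(x_0I+B^{(\ell)}\big)$. Since this is homogeneous of degree $N_\ell=\dim\rho_\ell$, the hypothesis $d(S,\rho_1)=d(S,\rho_2)$ already forces $N_1=N_2=:N$ (this is the case $\alpha=0$, where the sum runs over the empty word and equals $\dim\rho_\ell$). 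Expanding
\[ \det\!\big(x_0I+B^{(\ell)}\big)=\sum_{k=0}^{N}x_0^{\,N-k}\,e_k\!\big(B^{(\ell)}\big), \]
where $e_k(B^{(\ell)})=\tr\big(\wedge^k B^{(\ell)}\big)$ is the sum of the $k\times k$ principal minors (a homogeneous polynomial of degree $k$ in $x_1,\dots,x_m$), and comparing coefficients of $x_0^{\,N-k}$ gives $e_k(B^{(1)})=e_k(B^{(2)})$ in $\C[x_1,\dots,x_m]$ for every $k$.

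Next I would pass to power sums. By Newton's identities, over $\C$ each $p_k:=\tr(B^k)$ is a universal polynomial in $e_1,\dots,e_k$ (with the convention $e_j=0$ for $j>N$); hence the equalities $e_k(B^{(1)})=e_k(B^{(2)})$ for all $k$ propagate to $\tr\big((B^{(1)})^k\big)=\tr\big((B^{(2)})^k\big)$ in $\C[x_1,\dots,x_m]$ for every $k\ge1$. Finally, expand a single power sum for a generic $\rho$:
\[ \tr(B^k)=\sum_{(i_1,\dots,i_k)}x_{i_1}\cdots x_{i_k}\,\tr\!\big(A_{i_1}\cdots A_{i_k}\big)=\sum_{(i_1,\dots,i_k)}x_{i_1}\cdots x_{i_k}\,\chi_{\rho}\big(g_{i_1}\cdots g_{i_k}\big), \]
the sum running over all sequences $(i_1,\dots,i_k)\in\{1,\dots,m\}^k$. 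Identifying the sequence $(i_1,\dots,i_k)$ with the word $\omega=g_{i_1}\cdots g_{i_k}\in M(S)$, the monomial $x_1^{c_1}\cdots x_m^{c_m}$ records precisely the signature $\alpha$ in which $g_i$ occurs $c_i$ times; the sequences contributing to it are exactly the words of signature $\alpha$, so the coefficient of this monomial in $\tr(B^k)$ (with $k=|\alpha|$) equals $\sum_{sig(\omega)=\alpha}\chi_\rho(\omega)$. Comparing the coefficient of $x_1^{c_1}\cdots x_m^{c_m}$ on the two sides of $\tr\big((B^{(1)})^k\big)=\tr\big((B^{(2)})^k\big)$ then yields the assertion.

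The step I expect to require the most care is the last one: matching the non-commutative operator products $A_{i_1}\cdots A_{i_k}$ against the commutative monomials $x_{i_1}\cdots x_{i_k}$, so that collecting sequences by their monomial coincides with collecting words by their signature. Once this bijection is pinned down, the argument closes; and since no relation of $G(r,1,n)$ enters, the same reasoning will transfer directly to the settings of Section 4.
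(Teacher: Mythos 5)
Your proposal is correct and takes essentially the same route as the paper: both arguments reduce the determinant identity to the equality of $\tr\big((B^{(1)})^{k}\big)=\tr\big((B^{(2)})^{k}\big)$ in $\C[x_1,\dots,x_m]$ for all $k$, then expand the trace of the $k$-th power and read off $\sum_{sig(\omega)=\alpha}\chi_{\rho_\ell}(\omega)$ as the coefficient of the monomial $x_1^{c_1}\cdots x_m^{c_m}$. The only difference is cosmetic: you obtain the trace-power equalities via Newton's identities applied directly in the polynomial ring, while the paper evaluates at complex points and invokes the spectral mapping theorem before recovering the polynomial identity, so your version is, if anything, a touch cleaner at that step.
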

	\begin{proof} If $d(S,\rho_1)=d(S,\rho_2)$, then for each $(x_1,\cdots,x_{2n-1})\in \mathbb{C}^{2n-1}$ we have
		\begin{eqnarray*}
			\det(\lambda I-(\sum_{i=1}^{n-1} x_i\rho_1(s_i)+\sum_{i=1}^n x_{n-1+i}\rho_1(t_i)))\\
			=\det(\lambda I-(\sum_{i=1}^{n-1} x_i\rho_2(s_i)+\sum_{i=1}^n x_{n-1+i}\rho_2(t_i))).
		\end{eqnarray*}
		By using Spectral mapping theorem(refer to \cite{C1990}) of matrices, for each $m\in \mathbb{N}_+$ we have
		\begin{eqnarray*}
			\det(\lambda I-(\sum_{i=1}^{n-1} x_i\rho_1(s_i)+\sum_{i=1}^n x_{n-1+i}\rho_1(t_i))^m)\\
			=\det(\lambda I-(\sum_{i=1}^{n-1} x_i\rho_2(s_i)+\sum_{i=1}^n x_{n-1+i}\rho_2(t_i))^m).
		\end{eqnarray*}
		Hence \[Tr((\sum_{i=1}^{n-1} x_i\rho_1(s_i)+\sum_{i=1}^n x_{n-1+i}\rho_1(t_i))^m)=Tr((\sum_{i=1}^{n-1} x_i\rho_2(s_i)+\sum_{i=1}^n x_{n-1+i}\rho_2(t_i))^m).\]
		
		When we evaluate $(x_1,\cdots,x_{2n-1})$ with complex numbers, the two sides of equation are equal as polynomials, which implies that the corresponding coefficients of two sides are equal. Considering the value of $m$, we have \[\sum_{sig(\omega)=\alpha} \chi_{\rho_1}(\omega)=\sum_{sig(\omega)=\alpha} \chi_{\rho_2}(\omega),\]
		for each signature $\alpha\in \mathbb{N}^{2n-1}$.
	\end{proof}
	
	\begin{defn} Let $u$ be a word of $G(r,1,n)$ and $sig(u)=(a_1,b_1,\cdots,a_{n-1},$ $b_{n-1},b_n)$. Define \[ct(u)\triangleq(\lvert u \rvert,a_1,b_1,\cdots,a_{n-1},b_{n-1}).\]
		Define an order on the set $M(S)$ of all the words of $G(r,1,n)$ as \[u_1\le u_2 \quad \text{if} \quad ct(u_1) \le_{lex} ct(u_2), \]
		where $\le_{lex}$ is the lexicographic order on $\mathbb{N}^{2n-1}$. Clearly $(M(S),\le)$ has a minimum element $1$ with a length of 0.
	\end{defn}
	
	Inspired by \cite{BB2005,CS2021}, we define the echelon form.
	
	\begin{defn} We say a word is in \textit{echelon form} if it is of the form \[\omega = \delta_1\delta_2\cdots\delta_n,\]
		where for each $i$ the word $\delta_i$ satisfies
		\begin{equation}\label{2.1}
			\delta_i = \left\{\begin{aligned}
				&1,s_{i},t_i,\cdots,t_i^{r-2} \, or \, t_i^{r-1}\quad &\text{if}\,\, i<n, \\
				&1,t_i,\cdots,t_i^{r-2} \, or \, t_i^{r-1}      \quad &\text{if}\,\, i=n.
			\end{aligned}
			\right.
		\end{equation}
	\end{defn}
	\begin{rem} Note that when $\omega$ is in echelon form, it can be recovered from $ct(\omega)$. More precisely, given $ct(\omega)=(\lvert \omega \rvert ,a_1,b_1,\cdots,a_{n-1},b_{n-1})$, the tuple $(\lvert \omega \rvert ,a_1,b_1,\cdots,a_{n-1},b_{n-1})$ determines the tuple $(\delta_1,\cdots,\delta_n)$.
	\end{rem}
	To make any word conjugate to a word in echelon form, we need to introduce admissible transformations. This definition plays a crucial role in the subsequent work.
	\begin{defn} We introduce \textit{admissible transformation} on words of $M(S)$.
		
		(1)\textit{Canceling transformations}. That is, for each $i$ the transformation
		\begin{equation}\label{2.2}
			\omega^\prime s_i s_i \omega^{\prime \prime} \mapsto \omega^\prime \omega^{\prime \prime},\  1\le i \le n-1
		\end{equation}
		\begin{equation}\label{2.3}
			\omega^\prime \underbrace{t_i \cdots t_i}_r \omega^{\prime \prime} \mapsto \omega^\prime \omega^{\prime \prime},\ 1 \le i \le n
		\end{equation}
		are admissible.
		
		(2)\textit{Commuting transformations}. The transformations are below.
		\begin{equation}\label{2.4}\omega^\prime s_i s_j \omega^{\prime \prime} \mapsto \omega^\prime s_j s_i \omega^{\prime \prime},\ \lvert i-j \rvert >1
		\end{equation}
		\begin{equation}\label{2.5}
			\omega^\prime t_i t_j \omega^{\prime \prime} \mapsto \omega^\prime t_j t_i \omega^{\prime \prime},\ 1 \le i,j \le n
		\end{equation}
		\[\omega^\prime t_k s_i \omega^{\prime \prime} \mapsto \omega^\prime s_i t_k \omega^{\prime \prime} \quad\text{and}\]
		\begin{equation}\label{2.6}
			\omega^\prime s_i t_k \omega^{\prime \prime} \mapsto \omega^\prime t_k s_i \omega^{\prime \prime}, \  k \le {i-1}\  \text{or} \  k\ge {i+2}
		\end{equation}
		
		(3)\textit{Replacement transformations}. The transformations are below.
		\begin{equation}\label{2.7}\omega^\prime s_i s_{i+1} s_i \omega^{\prime \prime} \mapsto \omega^\prime s_{i+1} s_i s_{i+1}\omega^{\prime \prime},\ 1 \le i\le {n-2}
		\end{equation}
		\[\omega^\prime s_i t_i \omega^{\prime \prime} \mapsto \omega^\prime t_{i+1} s_i \omega^{\prime \prime},\ 1\le i \le {n-1} \quad\text{and}\]
		\begin{equation}\label{2.8}\omega^\prime t_i s_i \omega^{\prime \prime} \mapsto \omega^\prime s_{i} t_{i+1} \omega^{\prime \prime},\ 1\le i \le {n-1}
		\end{equation}
		\begin{equation}\label{2.9}\omega^\prime s_i t_{i+1}^k s_i \omega^{\prime \prime} \mapsto \omega^\prime t_{i}^k \omega^{\prime \prime},\ 1\le i \le {n-1}
		\end{equation}
		
		(4)\textit{Circular transformations}. Those are
		\begin{equation}\label{2.10}\delta_{i_1} \cdots \delta_{i_k}\delta_{i_{k+1}} \cdots \delta_{i_N} \mapsto \delta_{i_{k+1}} \cdots \delta_{i_N}\delta_{i_{1}} \cdots \delta_{i_k},
		\end{equation}
		where $\delta_{i_j} \in \{s_1,\cdots,s_{n-1},t_1,\cdots,t_n \}$.
	\end{defn}
	We write $\omega_1 \leadsto \omega_2$ if there is a sequence of admissible transformations that map $\omega_1$ into $\omega_2$.
	
	\begin{lemma} If $\omega_1 \leadsto \omega_2$, then $\omega_1$ is conjugate to $\omega_2$ and $ct(\omega_1) \ge_{lex} ct(\omega_2)$.
	\end{lemma}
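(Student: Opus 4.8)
The plan is to reduce everything to a single admissible transformation. By definition $\omega_1 \leadsto \omega_2$ means there is a finite chain $\omega_1 = \eta_0 \mapsto \eta_1 \mapsto \cdots \mapsto \eta_m = \omega_2$ of single admissible transformations. Both relations in the statement are transitive: ``$\phi(\cdot)$ conjugate to $\phi(\cdot)$ in $G(r,1,n)$'' is an equivalence relation, and $\ge_{lex}$ is transitive. Hence it suffices to prove, for one admissible transformation $\omega_1 \mapsto \omega_2$ of each of the four types, that $\phi(\omega_1)$ is conjugate to $\phi(\omega_2)$ and $ct(\omega_1) \ge_{lex} ct(\omega_2)$. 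Moreover, for the types (1)--(3) the move acts on a subword, $\omega_1 = \omega' u \omega''$ and $\omega_2 = \omega' v \omega''$, so once I show $\phi(u) = \phi(v)$ it follows that $\phi(\omega_1) = \phi(\omega')\phi(u)\phi(\omega'') = \phi(\omega')\phi(v)\phi(\omega'') = \phi(\omega_2)$, and the counts of the surrounding letters $\omega', \omega''$ are untouched, so only the local change matters for $ct$.

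For the conjugacy claim I check that each local move replaces a subword $u$ by $v$ with $\phi(u) = \phi(v)$: the cancelations (2.2), (2.3) are exactly relation (e), $s_i^2 = 1$ and $t_i^r = 1$; the commutations (2.4), (2.5), (2.6) are relation (a) and items (iv), (iii) of the Proposition; the braid move (2.7) is relation (b); the moves (2.8) are items (i) and (ii) of the Proposition; and (2.9) follows from (ii) together with $s_i^2 = 1$, since $s_i t_{i+1} s_i = t_i s_i s_i = t_i$, whence $s_i t_{i+1}^k s_i = (s_i t_{i+1} s_i)^k = t_i^k$. Thus for types (1)--(3) one has $\phi(\omega_1) = \phi(\omega_2)$, which are trivially conjugate. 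For the circular move (4), writing $\phi(\omega_1) = AB$ with $A = \phi(\delta_{i_1}\cdots \delta_{i_k})$ and $B = \phi(\delta_{i_{k+1}}\cdots \delta_{i_N})$, the image is $\phi(\omega_2) = BA = A^{-1}(AB)A$, so $\phi(\omega_2)$ is conjugate to $\phi(\omega_1)$. Transitivity of conjugacy over the chain finishes this half.

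For the inequality, recall $ct(u) = (|u|, a_1, b_1, \dots, a_{n-1}, b_{n-1})$, compared lexicographically with the length in the leading slot. The commuting moves (2) and the circular move (4) are permutations of the letters, so they leave $|u|$ and the whole signature fixed and give $ct(\omega_1) = ct(\omega_2)$. The cancelations (2.2), (2.3) and the replacement (2.9) strictly shorten the word, so $|\omega_1| > |\omega_2|$ and hence $ct(\omega_1) >_{lex} ct(\omega_2)$ already in the leading coordinate. The two length-preserving replacements are handled by finding the first coordinate at which the signatures differ: in (2.7) every $a_j, b_j$ with $j \le i-1$ is unchanged while $a_i$ drops by one (and $a_{i+1}$ rises by one), so $a_i$ is the first differing coordinate and $ct(\omega_1)$ is larger there; in (2.8) everything preceding $b_i$ is fixed while $b_i$ drops by one (with $b_{i+1}$, or $b_n$ when $i = n-1$, rising), so again $ct(\omega_1) >_{lex} ct(\omega_2)$. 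In every case $ct(\omega_1) \ge_{lex} ct(\omega_2)$, and transitivity of $\ge_{lex}$ propagates this along the chain.

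The argument is a finite case check rather than a single conceptual step, so the only real obstacle is clerical: one must confirm for each length-preserving replacement that the earliest index where the two signatures disagree is precisely the coordinate that strictly decreases, and that no earlier coordinate is disturbed. The mildly delicate point is the boundary index $i = n-1$ in (2.8), where the newly created letter is $t_n$ whose count $b_n$ is deliberately omitted from $ct$; this omission only discards a coordinate that would have increased, so it can only help the inequality. Getting an index or the orientation of an inequality backwards is the main risk, but there is no deeper difficulty.
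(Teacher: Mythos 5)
Your proposal is correct and follows exactly the route the paper intends: the paper's proof of this lemma is simply the one-line remark that it ``can be verified case by case for the admissible transformations,'' and your argument carries out precisely that case-by-case check (local moves preserve $\phi$-image by relations (a)--(f) and Proposition 2.2, the circular move conjugates by $A^{-1}(AB)A = BA$, and $ct$ weakly decreases since cancelations and (2.9) shorten the word, commutations and circular moves fix the signature, while (2.7) and (2.8) strictly decrease the first differing coordinate $a_i$ resp.\ $b_i$). Your treatment of the boundary case $i=n-1$ in (2.8), where the increase lands in the omitted coordinate $b_n$, is exactly the right observation and completes the verification the paper leaves to the reader.
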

	\begin{proof} It can be verified case by case for the admissible transformations in the above.
	\end{proof}

	\section{Characteristic polynomials for  $G(r,1,n)$ and $G(r_1,1,n_1)\times G(r_2,1,n_2)$}
	\subsection{Cases for $G(r,1,n)$ }
	\hspace{1.3em} In this section, we will prove that every word $\omega$ can be transformed into echelon form through admissible transformations. Before proving it, we need some lemmas.
	\begin{lemma}Let $G(r,1,2)$ be the imprimitive group with $S=\{s_1,t_1,t_2\}$ as in Definition 2.1. For a word $\omega \in M(S)$, let $sig(\omega)=(a_1,b_1,b_2)\in \mathbb{N}^{3}$ be the \textit{signature} of $\omega$. If $i$ is the smallest index of a letter in $\omega$, then $\omega \leadsto \omega^\prime \delta_i \omega^{\prime \prime}$, where the words $\omega^\prime$ and $\omega^{\prime \prime}$ only contain letters with index greater than $i$, and $\delta_i$ satisfies (\ref{2.1}). Furthermore, the sequence of admissible transformations can be chosen so that no circular transformations are used.
	\end{lemma}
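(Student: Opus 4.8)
The plan is to argue by induction on the number $m$ of occurrences of $s_1$ in $\omega$, the engine being a reduction that removes two copies of $s_1$ at a time; since every step used will be a commuting, replacement, or canceling transformation, no circular transformation ever enters. First dispose of the case where the smallest index is $2$: then $\omega = t_2^k$ is a word in $t_2$ alone, and repeatedly applying the canceling transformation (\ref{2.3}) gives $\omega \leadsto t_2^{k \bmod r}$, which has the required shape with $\omega'=\omega''=1$ and $\delta_2 = t_2^{k \bmod r}$ (second line of (\ref{2.1})). So assume the smallest index is $1$. I will in fact prove the slightly stronger statement that \emph{every} word in $s_1,t_1,t_2$ satisfies $\omega \leadsto \omega'\delta_1\omega''$ with $\omega',\omega''$ words in $t_2$ only and $\delta_1 \in \{1,s_1,t_1,\dots,t_1^{r-1}\}$; specialising to words that contain a letter of index $1$ then gives the lemma.

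Three facts drive the argument: $t_1$ and $t_2$ commute (transformation (\ref{2.5})); a $t_1$ can be slid across a single $s_1$ and converted into a $t_2$ by the replacement moves $s_1 t_1 \mapsto t_2 s_1$ and $t_1 s_1 \mapsto s_1 t_2$ of (\ref{2.8}); and two copies of $s_1$ separated by a pure power of $t_2$ collapse by (\ref{2.9}), namely $s_1 t_2^q s_1 \mapsto t_1^q$. For the inductive step suppose $m \ge 2$ and isolate the two leftmost occurrences, writing $\omega = A\,s_1\,B\,s_1\,C$ with $A,B$ free of $s_1$. Sorting $B$ by (\ref{2.5}) into $t_1^p t_2^q$ and then pushing the $t_1$'s through the first $s_1$ by $p$ applications of $s_1 t_1 \mapsto t_2 s_1$ turns $s_1 B s_1$ into $t_2^p\,s_1 t_2^q s_1$, whereupon (\ref{2.9}) collapses it to $t_2^p t_1^q$. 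Hence $\omega \leadsto A\,t_2^p t_1^q\,C$, a word with $m-2$ occurrences of $s_1$, and the induction hypothesis finishes this case.

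It remains to settle $m\in\{0,1\}$. If $m=0$ then $\omega$ is a word in $t_1,t_2$; commuting by (\ref{2.5}) gathers all the $t_1$'s into one block, say $\omega \leadsto t_2^a t_1^b t_2^c$, and (\ref{2.3}) reduces $t_1^b$ to $t_1^{b\bmod r}$, giving the form with $\delta_1 = t_1^{b \bmod r}$. If $m=1$, write $\omega = A s_1 B$ and sort by (\ref{2.5}) so that $A = t_2^a t_1^p$ and $B = t_1^q t_2^b$ with the $t_1$-blocks adjacent to $s_1$. Sliding $t_1^p$ to the right of $s_1$ (via $t_1 s_1 \mapsto s_1 t_2$) gives $t_2^a s_1 t_2^p t_1^q t_2^b$; one commutation by (\ref{2.5}) and then sliding $t_1^q$ through $s_1$ (via $s_1 t_1 \mapsto t_2 s_1$) yields $\omega \leadsto t_2^{a+q}\,s_1\,t_2^{p+b}$, i.e. $\delta_1 = s_1$ flanked by powers of $t_2$. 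Since $m$ drops by $2$ at each inductive step the process terminates, and only commuting, replacement, and canceling transformations were invoked.

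The one point needing care is the ordering of moves around the collapse (\ref{2.9}). Because the replacement transformations (\ref{2.8}) are directional — they trade a $t_1$ for a $t_2$ and so strictly lower $ct$ — one cannot simply extract the $t_1$'s trapped between two copies of $s_1$; instead they must first be converted to $t_2$'s by sliding them through one of the flanking $s_1$'s, so that the gap becomes a pure power of $t_2$ to which (\ref{2.9}) applies. Once this is arranged, the remaining steps are routine bookkeeping with the relation $t_1 t_2 = t_2 t_1$.
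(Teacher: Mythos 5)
Your proof is correct, and it takes a genuinely different route from the paper's. The paper argues by induction on $a_1+b_1$, the total number of index-$1$ letters: it peels off the rightmost occurrence of $s_1$ or $t_1$, applies the induction hypothesis to the prefix to bring it into the form $u'\delta_1 v'$, and then runs a four-way case analysis on the pair $(\delta_1,\delta'')\in\{s_1,t_1^k\}\times\{s_1,t_1\}$, using the observation that the intervening word is a power of $t_2$. You instead induct only on the number of occurrences of $s_1$, collapsing the two leftmost copies in one stroke via (\ref{2.8}) followed by (\ref{2.9}), and then settle $m\in\{0,1\}$ by a direct normal-form computation resting on the commutation $t_1t_2=t_2t_1$ of (\ref{2.5}); you also prove a slightly stronger statement, valid for every word in $M(S)$ with no hypothesis on the smallest index. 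What your version buys is economy: the decrement of two per step and the wholesale sorting of $t_1$'s eliminate the paper's repeated case bookkeeping, and your closing remark about the directionality of (\ref{2.8}) correctly identifies the one genuine pitfall (the trapped $t_1$'s must be converted to $t_2$'s before (\ref{2.9}) applies). What the paper's heavier scheme buys is that it is the template that iterates to $G(r,1,3)$ and $G(r,1,n)$ in Lemmas 3.2 and 3.3, where your global sorting is unavailable: there the alphabet of index $>1$ contains $s_2$, which commutes with neither $s_1$ nor $t_2$, whereas your argument exploits the special feature of $n=2$ that everything of index $>1$ is a power of the single letter $t_2$. Two trivial points of hygiene: when $q=0$ the collapse $s_1s_1\mapsto 1$ is the canceling move (\ref{2.2}) rather than (\ref{2.9}), and ``one commutation'' in your $m=1$ case is really a block of $pq$ applications of (\ref{2.5}); neither affects correctness, and no circular transformation is used anywhere, as required.
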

	\begin{proof}The case when $i= 2$ is trivial, so we assume that $i=1$. If $a_1+b_1=1$ then no transformations are needed, so we may assume that $a_1 + b_1 \ge 2$. Thus we have $\omega= u \delta v \delta^\prime s$ where $u$ and $v$ do not involve $s_1$ and $t_1$, and $\delta,\ \delta^\prime =s_1 \ \text{or} \  t_1$.
		
		Now, we prove this lemma by induction on $a_1+b_1$. Suppose first that $a_1+b_1=2$. Then $s$ does not involve $s_1$ and $t_1$. And $\omega= us_1vt_1s,ut_1vs_1s,us_1vs_1s$ $\text{or}\  ut_1vt_1s$, where $u$, $v$ and $s$ do not involve $s_1$ and $t_1$, hence they are all powers of $t_2$. Since $v$ can only involve $t_2$, by a sequence of commuting, replacement and canceling transformations, it yields that
		\begin{flushleft}
			\begin{enumerate}
				\item $\omega=us_1vt_1s \stackrel{(\ref{2.5})}{\leadsto} us_1t_1vs \stackrel{(\ref{2.8})}{\leadsto} ut_2s_1vs=ut_2 \delta_1 vs, \, \delta_1=s_1$,
				\item $\omega= ut_1vs_1s \stackrel{(\ref{2.5})}{\leadsto} uvt_1s_1s \stackrel{(\ref{2.8})}{\leadsto} uvs_1t_2s=uv \delta_1 t_2 s,\ \delta_1=s_1$,
				\item $\omega= us_1vs_1s = us_1t_2^ks_1s \stackrel{(\ref{2.9})}{\leadsto} ut_1^ks \stackrel{(\ref{2.3})}{\leadsto} ut_1^{k^\prime}s=u \delta_1  s,\ \delta_1=t_1^{k^\prime},\, 0\le k^{\prime} \le {r-1}$,
				\item $\omega=ut_1vt_1s \stackrel{(\ref{2.5})}{\leadsto} uvt_1^2 s=uv\delta_1s,\, \delta_1=t_1^2$.
			\end{enumerate}
		\end{flushleft}
		Suppose next that $a_1 +b_1 \ge 3$. Then $s=s^\prime \delta^{\prime \prime} s^{\prime \prime}$ where $s^{\prime \prime}$ does not involve $s_1$ and $t_1$, and $\delta^{\prime \prime}=s_1 \,\, \text{or} \,\, t_1$. Now, $\omega=u\delta v\delta^{\prime}s^\prime \delta^{\prime \prime} s^{\prime \prime}$, and by induction we have $u\delta v\delta^{\prime}s^\prime \leadsto u^{\prime} \delta_1 v^{\prime}$ where $u^{\prime}$ and $v^{\prime}$ do not involve $s_1$ and $t_1$ and no circular transformations were used, hence the same sequence of transformations yields $\omega \leadsto u^{\prime}\delta_1 v^{\prime}\delta^{\prime \prime} s^{\prime \prime}$. Because $\delta_1 \in \{1,s_1,t_1,\cdots,t_1^{r-1}\}$ and $\delta^{\prime \prime} \in \{s_1,t_1\}$ when $\delta_1 =1$, it is trivial, so only the four cases $u^{\prime}s_1 v^{\prime} s_1 s^{\prime \prime},u^{\prime}s_1 v^{\prime} t_1 s^{\prime \prime},u^{\prime}t_1^k v^{\prime} s_1 s^{\prime \prime}$ and $u^{\prime}t_1^k v^{\prime} t_1 s^{\prime \prime}$ need to be considered. Since $v^{\prime}$ can only involve $t_2$, a sequence of commuting, replacement and canceling transformations yields that
		\begin{flushleft}
			\begin{enumerate}
				\item $u^{\prime}s_1 v^{\prime} s_1 s^{\prime \prime} \stackrel{(\ref{2.3})}{\leadsto} u^{\prime}s_1 t_2^{k^\prime} s_1 s^{\prime \prime} \stackrel{(\ref{2.9})}{\leadsto} u^{\prime} t_1^{k^\prime} s^{\prime \prime}=u^\prime \delta_1 s^{\prime \prime}, \, \delta_1=t_1^{k^\prime} , \, 0\le k^\prime \le {r-1}$,
				\item $u^{\prime}s_1 v^{\prime} t_1 s^{\prime \prime} \stackrel{(\ref{2.5})}{\leadsto} u^{\prime}s_1 t_1 v^{\prime} s^{\prime \prime} \stackrel{(\ref{2.8})}{\leadsto} u^{\prime}t_2 s_1 v^{\prime} s^{\prime \prime}=u^{\prime}t_2 \delta_1 v^{\prime} s^{\prime \prime}, \, \delta_1=s_1$,
				\item $u^{\prime}t_1^k v^{\prime} s_1 s^{\prime \prime}  \stackrel{(\ref{2.5})}{\leadsto} u^{\prime} v^{\prime} t_1^k s_1 s^{\prime \prime} \stackrel{(\ref{2.8})}{\leadsto} u^{\prime} v^{\prime} s_1 t_2^k s^{\prime \prime}=u^{\prime} v^{\prime} \delta_1 t_2^k s^{\prime \prime}, \, \delta_1=s_1,$
				\item $u^{\prime}t_1^k v^{\prime} t_1 s^{\prime \prime}, \stackrel{(\ref{2.5})}{\leadsto} u^{\prime} v^{\prime} t_1^{k+1} s^{\prime \prime} \stackrel{(\ref{2.3})}{\leadsto} u^{\prime} v^{\prime} t_1^{l^\prime} s^{\prime \prime}=u^{\prime} v^{\prime} \delta_1 s^{\prime \prime},\, \delta_1=t_1^{l^\prime}, \, 0 \le l^{\prime} \le {r-1}$.
			\end{enumerate}
		\end{flushleft}
		This completes the proof for the lemma.
	\end{proof}
	The case $n=2$ has the conclusions of the above lemma, and it is natural to consider whether the same conclusion holds for $n=3$. Next, we consider the case of $G(r,1,3)$.
	\begin{lemma}Let $G(r,1,3)$ be the imprimitive group with $S=\{s_1,t_1,s_2,t_2,t_3\}$ as in Definition 2.1. For a word $\omega \in M(S)$, let $i$ be the smallest index of a letter in $\omega$, then $\omega \leadsto \omega^\prime \delta_i \omega^{\prime \prime}$, where the words $\omega^\prime$ and $\omega^{\prime \prime}$ only contain letters with index greater than $i$, and $\delta_i$ satisfies (\ref{2.1}). Furthermore, the sequence of admissible transformations can be chosen so that no circular transformations are used.
	\end{lemma}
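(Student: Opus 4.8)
The plan is to follow the template of the previous lemma (the case $G(r,1,2)$), splitting on the smallest index $i$ and reserving the real work for $i=1$. If $i=3$ the word $\omega$ is a power of $t_3$, hence already equal to a block $\delta_3=t_3^{k'}$ with $0\le k'\le r-1$ after applying (\ref{2.3}), so nothing is needed. If $i=2$, then $\omega$ is a word in $\{s_2,t_2,t_3\}$, and the subgroup $\langle s_2,t_2,t_3\rangle$ of $G(r,1,3)$ satisfies exactly the defining relations of $G(r,1,2)$ under the relabelling $s_2\mapsto s_1$, $t_2\mapsto t_1$, $t_3\mapsto t_2$ (compare relations (c),(e),(f) for $n=3$ with those for $n=2$). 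Under this relabelling every admissible transformation used in the two-generator argument maps to an admissible transformation of $G(r,1,3)$ with index shifted up by one (for instance (\ref{2.9}) becomes $s_2t_3^ks_2\leadsto t_2^k$, and (\ref{2.8}) becomes $s_2t_2\leadsto t_3s_2$), so I may quote the previous lemma to obtain $\omega\leadsto \omega'\delta_2\omega''$ with $\omega',\omega''$ powers of $t_3$ and $\delta_2$ of the form (\ref{2.1}), using no circular transformations.

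For $i=1$ I would induct on $a_1+b_1$, the total number of occurrences of the index-$1$ letters $s_1$ and $t_1$; the case $a_1+b_1\le 1$ is already of the desired shape. For the inductive step I isolate the first two index-$1$ letters, writing $\omega=u\,\delta\,v\,\delta'\,s$ with $\delta,\delta'\in\{s_1,t_1\}$ and $u,v$ words in $\{s_2,t_2,t_3\}$, merge $\delta,\delta'$ into a single index-$1$ block, and feed the remainder back into the induction exactly as in the previous lemma. The genuine difference from $n=2$ is that the intervening word $v$ may now contain $s_2$ and $t_2$, not merely powers of $t_3$. Three of the four shapes of $(\delta,\delta')$ remain easy: since $t_1$ commutes with every letter of higher index (by (\ref{2.5}) and (\ref{2.6})), in the configurations $t_1\,v\,t_1$, $t_1\,v\,s_1$ and $s_1\,v\,t_1$ I can slide the $t_1$ across $v$ and then apply (\ref{2.8}) or combine powers, reproducing cases~1,~2,~4 of the previous lemma.

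The one genuinely new configuration, and the main obstacle, is $s_1\,v\,s_1$ for arbitrary $v$ in $\{s_2,t_2,t_3\}$, because $s_1$ commutes with neither $s_2$ nor $t_2$. My plan here is to apply the previous lemma to $v$ itself: viewed inside the copy of $G(r,1,2)$ generated by $\{s_2,t_2,t_3\}$, one has $v\leadsto v'\delta_2 v''$ with $v',v''$ powers of $t_3$ and $\delta_2\in\{1,s_2,t_2,\dots,t_2^{r-1}\}$. Because the admissible transformations act inside any context $\omega'(\cdot)\omega''$, this gives $s_1\,v\,s_1\leadsto s_1\,v'\delta_2 v''\,s_1$, and $s_1$ commutes with the flanking powers $v',v''$ of $t_3$ through (\ref{2.6}), so the problem collapses to $s_1\delta_2 s_1$. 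Each of the three possibilities is then resolved by a single step: $\delta_2=1$ gives $s_1s_1\leadsto 1$ by (\ref{2.2}); $\delta_2=s_2$ gives $s_1s_2s_1\leadsto s_2s_1s_2$ by (\ref{2.7}), leaving one $s_1$ flanked by higher-index letters; and $\delta_2=t_2^k$ gives $s_1t_2^ks_1\leadsto t_1^k$ by (\ref{2.9}), a legitimate block $\delta_1=t_1^{k'}$ after reduction modulo $r$. In every case the two $s_1$'s collapse to a single index-$1$ block, which is all the induction requires, and since the final merge with the last index-$1$ letter is terminal, the transient growth of the exponent in the $t_1^k$ outcome causes no difficulty. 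As no circular transformation is invoked at any point — neither here nor in the previous lemma — the last clause of the statement follows as well.
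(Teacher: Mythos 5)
Your proposal is correct and takes essentially the same approach as the paper: the trivial $i=3$ case, reduction of $i=2$ to the $G(r,1,2)$ lemma by index-shifting, and for $i=1$ an induction on $a_1+b_1$ that normalizes the intervening word via Lemma 3.1, slides $t_1$ (which commutes with all higher-index letters) across, and resolves the crux $s_1\delta_2 s_1$ by (\ref{2.2}), (\ref{2.7}) and (\ref{2.9}), with terminal merges absorbing the transient growth of $t_1^k$ exponents exactly as the paper does in its subcases with $\delta_2=1$ and $\delta_2=t_2^k$. The only cosmetic difference is that the paper applies Lemma 3.1 to $v$ upfront in all four configurations of $(\delta,\delta')$ and then works through twelve explicit subcases, whereas you invoke it only where it is genuinely needed.
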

	\begin{proof}The case when $i= 3$ is trivial, so we may assume that $i\le {2}$, hence without loss of generality also that $i=1$. If $a_1+b_1=1$ then no transformations are needed, so we may assume that $a_1 + b_1 \ge 2$. Thus we have $\omega= u \delta v \delta^\prime s$ where $u$ and $v$ do not involve $s_1$ and $t_1$, and $\delta,\delta^\prime =s_1 \ \text{or} \  t_1$.
		
		Let $G(r,1,2)$ be the imprimitive group with $S^\prime=\{s_2,t_2,t_3\}$ as in Definition 2.1. By Lemma 3.1, we have $v \leadsto v^\prime \delta_2 v^{\prime \prime}$ where $v^\prime$ and $v^{\prime \prime}$ can only involve $t_3$, and no circular transformations are used. Therefore the same sequence of admissible transformations yields $\omega \leadsto u\delta v^\prime \delta_2 v^{\prime \prime} \delta^\prime s$. Since $\delta$ commutes with $t_3$, a sequence of commuting transformations yields $\omega \leadsto u v^\prime \delta \delta_2 v^{\prime \prime} \delta^\prime s$. Similarly, $u v^\prime \delta \delta_2 v^{\prime \prime} \delta^\prime s \leadsto u v^\prime \delta \delta_2  \delta^\prime v^{\prime \prime} s$. Now, we prove this lemma by induction on $a_1+b_1$.
		
		  \begin{Case}
       	 \item When $a_1+b_1=2$, we have $s$ does not involve $s_1$ and $t_1$.

         \item When $\delta_2=1$, $u v^\prime \delta \delta_2  \delta^\prime v^{\prime \prime} s=u v^\prime \delta \delta^\prime v^{\prime \prime} s$, there are the following four subcases:
		\begin{Case}
				\item $uv^\prime s_1 s_1 v^{\prime \prime}s \stackrel{(\ref{2.2})}{\leadsto} uv^\prime 1 v^{\prime \prime}s $.
				\item $uv^\prime t_1 s_1 v^{\prime \prime}s \stackrel{(\ref{2.8})}{\leadsto} uv^\prime s_1 t_2 v^{\prime \prime}s $.
				\item $uv^\prime s_1 t_1 v^{\prime \prime}s  \stackrel{(\ref{2.8})}{\leadsto} uv^\prime t_2 s_1 v^{\prime \prime}s $.
				\item $uv^\prime t_1 t_1 v^{\prime \prime}s = uv^\prime t_1^2 v^{\prime \prime}s $.
		\end{Case}
	\item	When $\delta_2=s_2$, $u v^\prime \delta \delta_2  \delta^\prime v^{\prime \prime} s=u v^\prime \delta s_2\delta^\prime v^{\prime \prime} s$, there are the following four subcases:
		\begin{Case}
				\item $uv^\prime s_1 s_2 s_1 v^{\prime \prime}s \stackrel{(\ref{2.7})}{\leadsto} uv^\prime s_2 s_1 s_2 v^{\prime \prime}s $.
				\item $uv^\prime t_1 s_2 s_1 v^{\prime \prime}s \stackrel{(\ref{2.6})}{\leadsto} uv^\prime s_2 t_1 s_1 v^{\prime \prime}s \stackrel{(\ref{2.8})}{\leadsto} u v^\prime s_2 s_1 t_2 v^{\prime \prime} s$.
				\item $uv^\prime s_1 s_2 t_1 v^{\prime \prime}s  \stackrel{(\ref{2.6})}{\leadsto} uv^\prime s_1 t_1 s_2 v^{\prime \prime}s \stackrel{(\ref{2.8})}{\leadsto} uv^\prime t_2 s_1 s_2 v^{\prime \prime}s$.
				\item $uv^\prime t_1 s_2 t_1 v^{\prime \prime}s \stackrel{(\ref{2.6})}{\leadsto} uv^\prime s_2 t_1^2 v^{\prime \prime}s.$
		\end{Case}
	\item	When $\delta_2=t_2^k$, $u v^\prime \delta \delta_2  \delta^\prime v^{\prime \prime} s=u v^\prime \delta t_2^k \delta^\prime v^{\prime \prime} s$, there are the following four subcases:
		\begin{Case}
				\item $uv^\prime s_1 t_2^k s_1 v^{\prime \prime}s \stackrel{(\ref{2.3})}{\leadsto} uv^\prime s_1 t_2^{k_1} s_1 v^{\prime \prime}s \stackrel{(\ref{2.9})}{\leadsto} uv^\prime t_1^{k_1} v^{\prime \prime}s,\, 0 \le k_1 \le {r-1}$.
				\item $uv^\prime t_1 t_2^k s_1 v^{\prime \prime}s \stackrel{(\ref{2.5})}{\leadsto} uv^\prime t_2^k t_1 s_1 v^{\prime \prime}s \stackrel{(\ref{2.8})}{\leadsto} uv^\prime t_2^k s_1 t_2 v^{\prime \prime}s$.
				\item $uv^\prime s_1 t_2^k t_1 v^{\prime \prime}s \stackrel{(\ref{2.5})}{\leadsto} uv^\prime s_1 t_1 t_2^k v^{\prime \prime}s \stackrel{(\ref{2.8})}{\leadsto} uv^\prime t_2 s_1 t_2^k v^{\prime \prime}s$.
				\item $uv^\prime t_1 t_2^k t_1 v^{\prime \prime}s \stackrel{(\ref{2.5})}{\leadsto} uv^\prime t_2^k t_1^2 v^{\prime \prime}s$.
		\end{Case}
		
			\end{Case}

		All three cases above are in the desired form. Now we consider $a_1+b_1 \ge 3$.
        We can still observe that $\omega \leadsto u v^\prime \delta \delta_2  \delta^\prime v^{\prime \prime} s$ (Note that the $s$ here involves $s_1$ or $t_1$).
        From the results above, it is known that when $\delta_2=1$, subcases 2.1, 2.2 and 2.3 have smaller values of $a_1+b_1$. By induction on $a_1+b_1$, we immediately deduce the desired results. In subcase 2.4, induction on $a_1+b_1$ implies $s \leadsto z_1 \delta_1 z_1^\prime $ where $z_1$ and $z_1^\prime$ do not involve $s_1$ and $t_1$, and no circular transformations are uesd. Therefore the same sequence of transformations yields $uv^\prime t_1^2 v^{\prime \prime}s \leadsto uv^\prime t_1^2 v^{\prime \prime}z_1 \delta_1 z_1^\prime$.
        If $\delta_1=1$, it follows that
        \[uv^\prime t_1^2 v^{\prime \prime}z_1 \delta_1 z_1^\prime=uv^\prime t_1^2 v^{\prime \prime}z_1  z_1^\prime\] is in the desired form.

        If $\delta_1=s_1$,
        \[uv^\prime t_1^2 v^{\prime \prime}z_1 \delta_1 z_1^\prime=uv^\prime t_1^2 v^{\prime \prime}z_1 s_1 z_1^\prime \leadsto uv^\prime v^{\prime \prime}z_1 t_1^2 s_1 z_1^\prime \leadsto uv^\prime v^{\prime \prime}z_1 s_1 t_2^2 z_1^\prime\] is in the desired form.

        If $\delta_1=t_1^k$,
        \[uv^\prime t_1^2 v^{\prime \prime}z_1 \delta_1 z_1^\prime=uv^\prime t_1^2 v^{\prime \prime}z_1 t_1^k z_1^\prime \leadsto uv^\prime v^{\prime \prime}z_1 t_1^{k+2} z_1^\prime \leadsto uv^\prime v^{\prime \prime}z_1 t_1^{k_0} z_1^\prime\] is in the desired form, where $0 \le k_0 \le {r-1}$.

        In a similar manner, the cases of $\delta_2=s_2$ and $\delta_2=t_2^k$ can also yield the desired results.
	\end{proof}
	Finally, we consider the case of $G(r,1,n)$, and prove that it has the conclusions of the above.
	\begin{lemma} Let $i$ be the smallest index of a letter in $\omega$. Then $\omega \leadsto \omega^\prime \delta_i \omega^{\prime \prime}$, where the words $\omega^\prime$ and $\omega^{\prime \prime}$ can only contain letters with index greater than $i$, and $\delta_i$ satisfies (\ref{2.1}). Furthermore, the sequence of admissible transformations can be chosen so that no circular transformations are used.
	\end{lemma}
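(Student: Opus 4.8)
The plan is to prove this by induction on $n$, taking as base the $G(r,1,2)$ case of the preceding lemmas; the $G(r,1,3)$ case then serves as a template for the inductive step. The structural fact that makes everything work is that among all the generators only $s_2$ and $t_2$ fail to commute with $s_1$ and $t_1$: by relations (a), (d) (equivalently the commuting transformations (\ref{2.4}), (\ref{2.5}), (\ref{2.6})), every letter of index $\ge 3$ commutes with both $s_1$ and $t_1$. Hence, once the index-$2$ part of a word is isolated, the local configuration surrounding the index-$1$ letters is exactly the one already analysed for $G(r,1,3)$.

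First I would reduce to $i=1$: if the smallest index $i$ satisfies $i\ge 2$, then $\omega$ is a word in $\langle s_i,\dots,s_{n-1},t_i,\dots,t_n\rangle\cong G(r,1,n-i+1)$ under the evident index shift, so the inductive hypothesis applies verbatim. Assuming $i=1$, the cases $a_1+b_1\le 1$ are trivial, and I would then run a secondary induction on $a_1+b_1$. Writing $\omega=u\delta v\delta' s$ with $u,v$ free of $s_1,t_1$ and $\delta,\delta'\in\{s_1,t_1\}$, the interior word $v$ lies in $\langle s_2,\dots,s_{n-1},t_2,\dots,t_n\rangle\cong G(r,1,n-1)$, so the inductive hypothesis gives $v\leadsto v'\delta_2 v''$ with $v',v''$ involving only letters of index $\ge 3$ and $\delta_2\in\{1,s_2,t_2,\dots,t_2^{r-1}\}$, using no circular transformations. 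Since $v'$ and $v''$ commute with $\delta$ and $\delta'$, commuting transformations (\ref{2.4})--(\ref{2.6}) bring $\omega$ to $u\,v'\,\delta\,\delta_2\,\delta'\,v''\,s$, so that $\delta\delta_2\delta'$ sits adjacently and involves only indices $1$ and $2$.

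At this point the reduction is governed by precisely the finite case analysis already carried out for $G(r,1,3)$: the three possibilities $\delta_2\in\{1,\,s_2,\,t_2^k\}$ combined with the four choices $(\delta,\delta')\in\{s_1,t_1\}^2$, each resolved by the replacement, commuting, and canceling transformations (\ref{2.2})--(\ref{2.9}). When $a_1+b_1=2$ the tail $s$ is free of $s_1,t_1$ and these reductions immediately yield the desired $\omega'\delta_1\omega''$ with $\delta_1$ satisfying (\ref{2.1}). When $a_1+b_1\ge 3$, every subcase except the one producing $t_1^2$ strictly lowers $a_1+b_1$, so the secondary induction closes those branches at once.

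The routine part is this case-by-case checking, which is word-for-word identical to the $G(r,1,3)$ computation and need not be repeated. The point that demands care---the main obstacle---is the bookkeeping of the secondary induction in the $\delta=\delta'=t_1$ branch: fusing two index-$1$ letters into $t_1^2$ does not by itself decrease $a_1+b_1$ when $r>2$. Here I would follow exactly the final paragraph of the $G(r,1,3)$ proof, namely first apply the secondary induction to the tail (obtaining $s\leadsto z_1\delta_1 z_1'$ with $z_1,z_1'$ free of $s_1,t_1$), then push the surplus $t_1^2$ to the right against $\delta_1$ and reduce the resulting power modulo $r$ via (\ref{2.3}) (or convert it through $s_1$ via (\ref{2.8}) when $\delta_1=s_1$), placing the word in echelon form. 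Throughout, neither the inductive hypothesis nor any local transformation (\ref{2.2})--(\ref{2.9}) is circular, so the ``no circular transformations'' clause is preserved.
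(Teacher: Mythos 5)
Your proposal is correct and takes essentially the same route as the paper's proof: the same double induction (on $n$ with the $G(r,1,2)$ lemma as base, secondarily on $a_1+b_1$), the same isolation of the block $\delta\delta_2\delta'$ by applying the inductive hypothesis to $v$ and commuting past letters of index $\ge 3$, and the same resolution by the $G(r,1,3)$ analysis --- the paper merely packages that analysis as a black-box appeal to Lemma 3.2 (peeling off the \emph{last} index-$1$ letter and recursing on the prefix when $a_1+b_1\ge 3$), where you inline it and recurse on the tail, which is exactly how Lemma 3.2's own proof proceeds. One bookkeeping slip worth noting: the subcase $s_1t_2^{k}s_1\leadsto t_1^{k_1}$ need not lower $a_1+b_1$ either, since $k_1$ can be as large as $r-1$, but your block-and-tail remedy for the $t_1^2$ branch (reduce the tail by the secondary induction, push the $t_1$-power right, merge with $\delta_1$, and reduce modulo $r$ or flip through $s_1$) applies to it verbatim, so nothing breaks.
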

	\begin{proof} The case when $i= n$ is trivial, so we may assume that $i\le {n-1}$. Without loss of generality, assume $i=1$. If $a_1+b_1=1$ then no transformations are needed, so we may assume that $a_1 + b_1 \ge 2$. Thus we have $\omega= u \delta v \delta^\prime s$ where $u$ and $v$ do not involve $s_1$ and $t_1$, and $\delta,\delta^\prime =s_1 \ \text{or} \  t_1$.
		
		The case $n=2$ has already been proven by Lemma 3.1. Next, suppose $n \ge 3$. If $a_1+b_1=2$, then s does not involve $s_1$ and $t_1$. By induction on $n$ we have that $v \leadsto v^\prime \delta_2 v^{\prime \prime}$ where $v^\prime$ and $v^{\prime \prime}$ can only involve letters with index $3$ or higher, and no circular transformations are used. Therefore the same sequence of admissible transformations yields $\omega \leadsto u\delta v^\prime \delta_2 v^{\prime \prime} \delta^\prime s$. Since $\delta$ commutes with all the letters appearing in $v^\prime$, a sequence of commuting transformations yields $\omega \leadsto u v^\prime \delta \delta_2 v^{\prime \prime} \delta^\prime s$. Similarly, we have $u v^\prime \delta \delta_2 v^{\prime \prime} \delta^\prime s \leadsto u v^\prime \delta \delta_2  \delta^\prime v^{\prime \prime} s$. Let $G(r,1,3)$ be the imprimitive group with $S^\prime=\{s_1,t_1,s_2,t_2,t_3\}$ as in Definition 2.1. Then $\delta \delta_2  \delta^\prime \in M(S^\prime)$. By Lemma 3.2, we have $\delta \delta_2  \delta^\prime \leadsto \omega^\prime \delta_1 \omega^{\prime \prime}$ where $\omega^\prime$ and $\omega^{\prime \prime}$ do not involve $s_1$ and $t_1$. Therefore the same sequence of transformations yields $u v^\prime \delta \delta_2  \delta^\prime v^{\prime \prime} s \leadsto u v^\prime \omega^\prime \delta_1 \omega^{\prime \prime} v^{\prime \prime} s$.
		
		When $a_1+b_1 \ge 3$, $s=s^\prime \delta^{\prime \prime} s^{\prime \prime}$ where $s^{\prime \prime}$ does not involve $s_1$ and $t_1$, and $\delta^{\prime \prime}=s_1 \  \text{or} \  t_1$. Now, $\omega=u\delta v\delta^{\prime}s^\prime \delta^{\prime \prime} s^{\prime \prime}$, and by induction we have $u\delta v\delta^{\prime}s^\prime \leadsto u^{\prime} \delta_1 v^{\prime}$ where $u^{\prime}$ and $v^{\prime}$ do not involve $s_1$ and $t_1$ and no circular transformations were used, hence the same sequence of transformations yields $\omega \leadsto u^{\prime}\delta_1 v^{\prime}\delta^{\prime \prime} s^{\prime \prime}$. By induction on $n$, we have that $v^\prime \leadsto v_1 \delta_2 v_2$ where $v_1$ and $v_2$ can only involve letters with index $3$ or higher, and no circular transformations are used. Therefore the same sequence of admissible transformations yields $\omega \leadsto u^\prime \delta_1 v_1 \delta_2 v_2 \delta^{\prime \prime} s^{\prime \prime}$.
		Since $\delta_1$ commutes with all the letters appearing in $v_1$, a sequence of commuting transformations yields $\omega \leadsto u^\prime v_1 \delta_1 \delta_2 v_2 \delta^{\prime \prime} s^{\prime \prime}$. Similarly, $u^\prime v_1 \delta_1 \delta_2 v_2 \delta^{\prime \prime} s^{\prime \prime} \leadsto u^\prime v_1 \delta_1 \delta_2 \delta^{\prime \prime} v_2 s^{\prime \prime}$. By Lemma 3.2, we have $\delta_1 \delta_2 \delta^{\prime \prime} \leadsto \omega^\prime \delta_1 \omega^{\prime \prime}$ where $\omega^\prime$ and $\omega^{\prime \prime}$ do not involve $s_1$ and $t_1$. Therefore the same sequence of transformations yields that $u^\prime v_1 \delta_1 \delta_2 \delta^{\prime \prime} v_2 s^{\prime \prime} \leadsto u^\prime v_1 \omega^\prime \delta_1 \omega^{\prime \prime} v_2 s^{\prime \prime}$. This completes the proof of the lemma.
	\end{proof}
	Now, we will utillize the aforementioned lemmas to prove that the word $\omega$ can be transformed into echelon form through admissible transformations.
	\begin{thm} Let $\omega$ be a word. Then there exists a word $\tilde{\omega}$ in echelon form such that $\omega \leadsto \tilde{\omega}$, in particular $\omega$ is conjugate to $\tilde{\omega}$ and $ct(\omega) \ge_{lex} ct(\tilde{\omega})$.
	\end{thm}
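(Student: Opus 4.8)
The plan is to induct on the number $d$ of distinct indices occurring in $\omega$, peeling off one echelon block $\delta_i$ at a time in increasing order of index. The two ``in particular'' assertions need no separate work: once $\omega \leadsto \tilde\omega$ is established, the facts that $\omega$ is conjugate to $\tilde\omega$ and that $ct(\omega)\ge_{lex} ct(\tilde\omega)$ follow immediately from the earlier lemma stating that every admissible transformation preserves the conjugacy class and does not increase $ct$. So the entire content is to manufacture, by admissible transformations, an echelon word out of $\omega$.

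I would maintain the invariant that the current word has the shape $\Pi R$, where $\Pi=\delta_{i_1}\cdots\delta_{i_l}$ is an already-placed echelon prefix with $i_1<\cdots<i_l$, and $R$ is a word all of whose letters have index strictly greater than $i_l$. Initially $\Pi$ is empty and $R=\omega$. For the inductive step, let $i=i_{l+1}$ be the smallest index appearing in $R$, so $i>i_l$. Applying the preceding extraction lemma to $R$ gives $R\leadsto X' \delta_i X''$ using \emph{no} circular transformations, where $X'$ and $X''$ involve only letters of index $>i$ and $\delta_i$ satisfies (\ref{2.1}). Since the transformations of types (1)--(3) act on a contiguous subword with arbitrary surrounding context, the very same sequence applied inside $\Pi R$ (with $\Pi$ sitting harmlessly in the left context) yields $\Pi R\leadsto \Pi X'\delta_i X''$.

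The key move is then to slide $X'$ out from between $\Pi$ and $\delta_i$. Every letter of $X'$ has index $\ge i+1\ge i_m+2$ for each placed index $i_m\le i_l$ (since $i>i_l\ge i_m$ forces $i\ge i_m+1$), and a letter of index $\ge i_m+2$ commutes with $\delta_{i_m}$ by the commuting transformations (\ref{2.4}), (\ref{2.5}), (\ref{2.6}) (equivalently, relations (a), (iii) and (iv)). Hence $X'$ commutes with all of $\Pi$, so commuting transformations give $\Pi X'\delta_i X''\leadsto X'\Pi\delta_i X''$, and a single circular transformation (\ref{2.10}) moving the prefix $X'$ to the rear produces $\Pi\delta_i X''X'$. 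This is again of the form $\Pi'R'$ with $\Pi'=\Pi\delta_i$ (still echelon-ordered, as $i>i_l$) and $R'=X''X'$ a word in letters of index $>i$; in particular $R'$ has strictly fewer distinct indices than $R$.

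Iterating, the number of distinct indices in the remainder strictly decreases, so after finitely many steps the remainder is empty and the word equals $\Pi=\delta_{i_1}\cdots\delta_{i_m}$; setting the missing blocks to $\delta_j=1$ exhibits an echelon word $\tilde\omega$ with $\omega\leadsto\tilde\omega$. I expect the main obstacle to be precisely the interaction between circular transformations, which act on the whole word, and the already-placed blocks: naively rotating the high-index remainder inside $\Pi R$ would scramble $\Pi$, and in general $\Pi\,AB$ and $\Pi\,BA$ are not even conjugate, so circular transformations cannot be localized for free. The resolution is the index-gap observation above: anything that ever needs to be rotated past $\Pi$ has index at least two larger than every placed index, hence commutes with $\Pi$ and can be commuted to the front before the rotation is carried out. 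Checking these commutations is routine from the defining relations, and the circular-transformation-free extraction of $\delta_i$ is exactly what the preceding lemma provides.
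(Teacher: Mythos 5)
Your proposal is correct and takes essentially the same approach as the paper's proof: both iteratively extract the smallest-index block $\delta_i$ from the remainder via the circular-transformation-free Lemma 3.3, commute the leftover prefix (whose letters have index at least two greater than every placed index) past the already-assembled echelon prefix, and then apply a single circular transformation to move it to the rear, with conjugacy and $ct$-monotonicity supplied by Lemma 2.9. Your explicit index-gap justification for why the rotation can be carried out without scrambling the placed blocks is exactly the mechanism the paper uses, stated there only implicitly.
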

	\begin{proof} Let $i_1$ be the smallest index of a letter in $\omega$. By Lemma 3.3 we get $\omega \leadsto \omega^\prime \delta_{i_1} \omega^{\prime \prime}$, hence a circular transformation produces $\omega \leadsto \delta_{i_1} \omega^{\prime \prime}\omega^\prime=\delta_1 \cdots \delta_{i_1}\omega_{i_1}$ where $\omega_{i_1}$ only involves letters with index greater than $i_1$. Suppose for some $k \le {n-1}$ we have already obtained $\omega \leadsto \delta_1 \cdots \delta_k \omega_k$ where $\omega_k$ can only involve letters with index greater than $k$. If $k< {n-1}$, by Lemma 3.3, we have $\omega_k \leadsto \omega_k^\prime \delta_{k+1} \omega_k^{\prime \prime}$ where no circular transformations are used, and $\omega_k$ and $\omega_k^{\prime \prime}$ can only involve letters with index greater than $k+1$. Thus the same sequence of transformations yields $\omega \leadsto \delta_1 \cdots \delta_k \omega_k^\prime \delta_{k+1} \omega_k^{\prime \prime}$. Therefore a sequence of commuting transformations produces $\omega \leadsto \omega_k^\prime \delta_1 \cdots \delta_k \delta_{k+1} \omega_k^{\prime \prime}$. Now a circular transformation gives $\omega \leadsto \delta_1 \cdots \delta_{k+1} \omega_{k+1}$ where $\omega_{k+1}= \omega_k^{\prime \prime}\omega_k^\prime$ can only involve letters of index greater than $k+1$. Repeat this process until $k=n-1$. Then $\omega \leadsto \delta_1 \cdots \delta_{n-1} \omega_{n-1}$ where $\omega_{n-1}$ involves only the letter $t_n$. Therefore, the theorem holds.
	\end{proof}
	\begin{lemma} Let $\omega_1,\omega_2$ be words in G(r,1,n). If there exists words $\tilde{\omega_1}$ and $\tilde{\omega_2}$ in echelon form such that $\omega_1 \leadsto \tilde{\omega_1},\omega_2 \leadsto \tilde{\omega_2}$ and $ct(\tilde{\omega_1})=ct(\tilde{\omega_2})$, it follows that $\omega_1$ is conjugate to $\omega_2$.
	\end{lemma}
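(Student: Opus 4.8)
The plan is to reduce the statement to two facts already in hand: the uniqueness of the echelon form as encoded by $ct$ (Remark 2.7), and the conjugacy-invariance of admissible transformations (Lemma 2.9). The core of the argument is the observation that two echelon-form words sharing the same $ct$-value must coincide \emph{as words} in $M(S)$, not merely up to conjugacy; granting this, the desired conjugacy of $\omega_1$ and $\omega_2$ drops out by routing both through their common echelon form.

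First I would prove $\tilde{\omega_1}=\tilde{\omega_2}$ as elements of $M(S)$. Write each echelon form as $\tilde{\omega_j}=\delta_1^{(j)}\cdots\delta_n^{(j)}$. By \eqref{2.1}, every block $\delta_i$ with $i<n$ is a single symbol among $1,s_i,t_i,\dots,t_i^{r-1}$, so at most one of $a_i,b_i$ is nonzero and the pair $(a_i,b_i)$ determines $\delta_i$ outright; likewise $\delta_n=t_n^{b_n}$ is determined by $b_n$. Although $ct$ records only $(|\omega|,a_1,b_1,\dots,a_{n-1},b_{n-1})$ and suppresses $b_n$, the missing exponent is recovered from the length via $b_n=|\omega|-\sum_{i=1}^{n-1}(a_i+b_i)$. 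Hence the hypothesis $ct(\tilde{\omega_1})=ct(\tilde{\omega_2})$ forces $\delta_i^{(1)}=\delta_i^{(2)}$ for all $i$, i.e. $\tilde{\omega_1}=\tilde{\omega_2}$. This is exactly the content of Remark 2.7, and it is the only place where the precise structure of the echelon form is used.

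Next I would apply Lemma 2.9 twice. From $\omega_1\leadsto\tilde{\omega_1}$ it gives that $\phi(\omega_1)$ is conjugate to $\phi(\tilde{\omega_1})$ in $G(r,1,n)$, and from $\omega_2\leadsto\tilde{\omega_2}$ that $\phi(\omega_2)$ is conjugate to $\phi(\tilde{\omega_2})$. Because $\tilde{\omega_1}=\tilde{\omega_2}$ as words we have $\phi(\tilde{\omega_1})=\phi(\tilde{\omega_2})$, and conjugacy in a group is transitive and symmetric, so $\phi(\omega_1)$ is conjugate to $\phi(\omega_2)$; this is the assertion.

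I do not expect a genuine obstacle, since the heavy combinatorial work has already been discharged in Theorem 3.4 and Lemma 2.9. The one point demanding care is the bookkeeping in the first step: one must confirm that $ct$, despite dropping $b_n$, still pins down the echelon word, which hinges on reading $b_n$ off the length coordinate. If that recovery were to fail---say if $\delta_i$ could simultaneously carry an $s_i$ and a power of $t_i$---the whole reduction would collapse, so the real (if modest) content is verifying that the echelon normal form is rigid enough to be reconstructed from $ct$ alone.
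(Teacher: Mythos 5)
Your proposal is correct and follows essentially the same route as the paper: invoke Remark 2.7 to conclude $\tilde{\omega_1}=\tilde{\omega_2}$ from $ct(\tilde{\omega_1})=ct(\tilde{\omega_2})$, then apply Lemma 2.9 to both reductions and use symmetry and transitivity of conjugacy. Your added verification that $b_n$ is recoverable from the length coordinate is a useful elaboration of Remark 2.7, which the paper simply cites without spelling out.
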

	\begin{proof} By Lemma 2.9, we obtain that $\omega_1$ and $\omega_2$ are conjugate to $\tilde{\omega_1}$ and $\omega_2$, respectively. By Remark 2.7, it follows that $\tilde{\omega_1}=\tilde{\omega_2}$, which implies that $\omega_1$ is conjugate to $\omega_2$.
	\end{proof}
	\begin{thm} Let $G(r,1,n)$ be a imprimitive group with $S=\{s_1,\cdots,s_{n-1},$ $t_1,\cdots,t_n\}$ as its generating set. For two finite dimensional complex linear representations $\rho_1$ and $\rho_2$ of $G(r,1,n)$, the characteristic polynomials $d(S,\rho_1)=d(S,\rho_2)$ if and only if $\rho_1\cong \rho_2$.
	\end{thm}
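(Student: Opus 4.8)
The plan is to prove the two implications separately. The forward implication is immediate: if $\rho_1 \cong \rho_2$, pick an invertible $P$ with $\rho_2(g) = P\rho_1(g)P^{-1}$ for all $g \in G(r,1,n)$; then the pencil $x_0 I + \sum_{i=1}^{n-1} x_i\rho_2(s_i) + \sum_{j=1}^{n} x_{n-1+j}\rho_2(t_j)$ is conjugate by $P$ to the corresponding pencil for $\rho_1$, so the two determinants coincide and $d(S,\rho_1) = d(S,\rho_2)$.

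For the converse, suppose $d(S,\rho_1) = d(S,\rho_2)$. Since over $\mathbb{C}$ a finite-dimensional representation is determined up to isomorphism by its character, it suffices to show $\chi_{\rho_1} = \chi_{\rho_2}$. Because each generator is its own inverse up to a positive power ($s_i^{-1} = s_i$ and $t_j^{-1} = t_j^{r-1}$), every element of $G(r,1,n)$ is the image of a word in $M(S)$, and by Theorem 3.4 every such word is conjugate to a word in echelon form. As characters are class functions, I would reduce the task to proving $\chi_{\rho_1}(e) = \chi_{\rho_2}(e)$ for every echelon word $e$. The key bookkeeping fact is that $sig$ and $ct$ encode the same data, since $\lvert\omega\rvert = \sum_i a_i + \sum_j b_j$; in particular all words of a fixed signature $\alpha$ share one value $ct = \gamma$, while by Remark 2.7 the echelon word with a prescribed $ct$ is unique.

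Next I would induct on $ct(e)$ in the (well-founded) lexicographic order on $\mathbb{N}^{2n-1}$. Fix an echelon word $e$, put $\gamma = ct(e)$ and $\alpha = sig(e)$. Every word $\omega$ with $sig(\omega) = \alpha$ has $ct(\omega) = \gamma$ and, by Theorem 3.4, reduces to an echelon word $\tilde\omega$ with $\omega$ conjugate to $\tilde\omega$ and $ct(\tilde\omega) \le_{lex} \gamma$; whenever $ct(\tilde\omega) = \gamma$ one has $\tilde\omega = e$ by Remark 2.7. Splitting the finite set $\{\omega : sig(\omega) = \alpha\}$ (which does not depend on $\rho$) into those reducing to $e$ and those reducing to an echelon word of strictly smaller $ct$, Theorem 2.4 gives, for $\rho \in \{\rho_1,\rho_2\}$,
\[
\sum_{sig(\omega) = \alpha} \chi_\rho(\omega) = m_e\,\chi_\rho(e) + \sum_{\omega\,:\,ct(\tilde\omega) <_{lex} \gamma} \chi_\rho(\tilde\omega),
\]
with $m_e \ge 1$ (the first group contains $e$). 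The left sides agree by Theorem 2.4, and the second sums agree for $\rho_1$ and $\rho_2$ term by term by the induction hypothesis, since every $\tilde\omega$ appearing there is an echelon word with $ct <_{lex} \gamma$. Cancelling leaves $m_e\,\chi_{\rho_1}(e) = m_e\,\chi_{\rho_2}(e)$, hence $\chi_{\rho_1}(e) = \chi_{\rho_2}(e)$. The base case $e = 1$ is the $\alpha = 0$ instance, forcing $\dim \rho_1 = \dim \rho_2$.

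The hard part is not any single estimate but making this bookkeeping airtight: one must confirm that the ``top'' contribution to a signature sum is exactly $m_e\,\chi_\rho(e)$, attached to the single echelon word $e$ of maximal $ct$-value, so that all remaining contributions have strictly smaller $ct$ and are swept up by the induction. This rests on three points already secured earlier---that a fixed signature determines $ct$, that echelon reduction never increases $ct$ (Theorem 3.4), and that $ct$ determines the echelon word uniquely (Remark 2.7)---so that the genuine combinatorial difficulty has been discharged in Theorem 3.4, and what remains here is the character-theoretic descent.
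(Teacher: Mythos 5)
Your proof is correct and takes essentially the same route as the paper's: both convert $d(S,\rho_1)=d(S,\rho_2)$ into equality of signature-sums of characters via Theorem 2.4, then run a lexicographic induction that uses the echelon reduction (Theorem 3.4), the monotonicity $ct(\omega)\ge_{lex} ct(\tilde\omega)$ (Lemma 2.9), and the uniqueness of the echelon word with a given $ct$ (Remark 2.7, Lemma 3.5) to isolate the top contribution $c\,\chi_\rho(\omega)$ and cancel after the induction hypothesis disposes of the strictly smaller terms. Your only deviation --- organizing the induction over echelon words ordered by $ct$ rather than over all words in the paper's partial order --- is a cosmetic repackaging of the identical argument.
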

		\begin{proof} If $\rho_1 \cong \rho_2$, there exists an invertible matrix $X$ such that $\rho_2(\omega)=X^{-1}\rho_1(\omega)X$ for each $\omega \in G(r,1,n)$. It follows that
    \begin{eqnarray*}
    d(S,\rho_2)&=&det(x_0I+\sum_{i=1}^{n-1} x_i\rho_2(s_i)+\sum_{i=1}^n x_{n-1+i}\rho_2(t_i)) \\
    &=&det(X^{-1}(x_0I+\sum_{i=1}^{n-1} x_i\rho_1(s_i)+\sum_{i=1}^n x_{n-1+i}\rho_1(t_i))X)\\
    &=&det((x_0I+\sum_{i=1}^{n-1} x_i\rho_1(s_i)+\sum_{i=1}^n x_{n-1+i}\rho_1(t_i)))\\
    &=&d(S,\rho_1).
 \end{eqnarray*}
  If $d(S,\rho_1)=d(S,\rho_2)$, then by Theorem 2.4 we have
		\begin{equation}
			\sum_{sig(\omega)=\alpha} \chi_{\rho_1}(\omega)=\sum_{sig(\omega)=\alpha} \chi_{\rho_2}(\omega),
		\end{equation}
		for each signature $\alpha\in \mathbb{N}^{2n-1}$.
		
		Recall that the character of a finite dimensional representation $\rho$ is the class function $\chi_\rho (\omega)=Tr(\rho(\omega))$. Since the character determines the representation up to an equivalence, in the remaining cases for $G(r,1,n)$, it suffices to show by induction on the partial ordering of words that for each word $\omega$ in $G(r,1,n)$ we have
		\[Tr(\rho_1(\omega))=Tr(\rho_2(\omega)).\]
		When $\omega=1$, $sig(\omega)=(0,\cdots,0) \in \mathbb{N}^{2n-1}$, we have
		\[Tr(\rho_1(1))=\sum_{sig(u)=(0,\cdots,0)} \chi_{\rho_1}(u)=\sum_{sig(u)=(0,\cdots,0)} \chi_{\rho_2}(u)=Tr(\rho_2(1)).\]
		Suppose $\omega\neq1$ and we have proved our statement for all words smaller than $\omega$ for the partial ordering of words. Let $sig(\omega)=\alpha=(a_1,b_1,\cdots,a_{n-1},b_{n-1},b_n)$, for each word $u$ with $sig(u)=\alpha$ let $\tilde{u}$ be a word in echelon form such that $u \leadsto \tilde{u}$, as proved in the Theorem 3.4. Thus, by Lemma 2.9, we have $ct(\omega)=ct(u) \ge_{lex} ct(\tilde{u})$. Let
		\[c=\lvert \{u\ |sig(u)=\alpha \, \text{and}\,  ct(\tilde{u})=ct(\omega)\}\rvert .\]
		Since $u$ and $\tilde{u}$ belong to the same conjugacy class in $G(r,1,n)$, we can rewrite $(3.1)$ as the follows,
		\begin{eqnarray}
			\sum_{\substack{sig(u)=\alpha\\
					\tilde{u}<\omega}} Tr(\rho_1(\tilde{u}))
			+\sum_{\substack{sig(u)=\alpha\\
					ct(\tilde{u})=ct(\omega)}} Tr(\rho_1(\tilde{u}))\\
			=\sum_{\substack{sig(u)=\alpha\\
					\tilde{u}<\omega}} Tr(\rho_2(\tilde{u}))
			+\sum_{\substack{sig(u)=\alpha\\
					ct(\tilde{u})=ct(\omega)}} Tr(\rho_2(\tilde{u})).
		\end{eqnarray}
		If $\tilde{\omega}<\omega$ then by induction hypothesis we get
		\[Tr(\rho_1(\omega))=Tr(\rho_1(\tilde{\omega}))=Tr(\rho_2(\tilde{\omega}))=Tr(\rho_2(\omega)),\]
		so we may assume that $ct(\omega)=ct(\tilde{\omega}).$ Also, note that, if $sig(u_0)=\alpha$ and $ct(\tilde{u}_0)=ct(\omega)$, we have $ct(\tilde{u}_0)=ct(\omega)=ct(\tilde{\omega}).$ Therefore, in view of Remark 2.7 and Lemma 3.5, in all possible cases for $u$ with $sig(u)=\alpha$ and $ct(\tilde{u})=ct(\omega)$, we must have $\tilde{u}$ in the same conjugacy class as $\omega$. Therefore $Tr(\rho_i(\tilde{u}))=Tr(\rho_i(\omega))$, and \[\sum_{\substack{sig(u)=\alpha\\
				ct(\tilde{u})=ct(\omega)}} Tr(\rho_i(\tilde{u})) = c\, Tr(\rho_i(\omega)),
		\]for $i=1,2$. Since by the induction hypothesis $Tr(\rho_1(\tilde{u}))=Tr(\rho_2(\tilde{u}))$ whenever $\tilde{u}<\omega$, (3.2) reduces to \[c\, Tr(\rho_1(\omega))=c \, Tr(\rho_2(\omega)).\]
      Therefore, the theorem holds.
	\end{proof}

	\subsection{Characteristic polynomials for $G(r_1,1,n_1)\times G(r_2,1,n_2)$ }
	\hspace{1.3em}In this section, we will prove the conclusions in Theorem 3.6 hold for $G=G(r_1,1,n_1)\times G(r_2,1,n_2)$. To simplify notation, we denote $G_i=G(r_i,1,n_i)$, for $i=1,2$.
	\begin{defn} Let $G=G(r_1,1,n_1)\times G(r_2,1,n_2)$. Generating sets of $G_1,G_2,G$ are $S,T,S\bigcup T$ respectively as in Definition 2.1. Sets $S=\{s_1,t_1,$ $\cdots, s_{n_1-1},t_{n_1-1},t_{n_1}\}$ and $T=\{s_1^\prime,t_1^\prime,\cdots,$ $s_{n_2-1}^\prime,t_{n_2-1}^\prime,t_{n_2}^\prime\}$. We say a word $\omega \in G$ is in \textit{separation form} if $\omega=\omega_1 \omega_2$ such that $\omega_1 \in M(S)$ and $\omega_2 \in M(T)$. Let word $\omega \in G$, replace the letters in $\omega$ from $T$ with $1$, let $\omega_1$ be the new word obtained, replace the letters in $\omega$ from $S$ with $1$, let $\omega_2$ be the new word obtained. We call $\omega^\prime=\omega_1 \omega_2$ the \textit{separation form} of $\omega$, where $\omega_1 \in M(S)$ and $\omega_2 \in M(T)$.
	\end{defn}
	\begin{defn} Let $\omega$ be a word in $G$. The \textit{signature} of $\omega$ is $sig(\omega)=(\alpha,\beta)$ where $\alpha=(a_1,b_1,\cdots,a_{n_1-1},b_{n_1-1},b_{n_1})$ and $\beta=(a_1^\prime,b_1^\prime,\cdots,a_{n_2-1}^\prime,b_{n_2-1}^\prime,b_{n_2}^\prime)$.
		Let \[ct(u)\triangleq(\lvert u \rvert,a_1,b_1,\cdots,a_{n_1-1},b_{n_1-1},b_{n_1},a_1^\prime,b_1^\prime,\cdots,a_{n_2-1}^\prime,b_{n_2-1}^\prime)\]
		Define an order on the set $M(S \bigcup T)$ of all the words of $G$ as \[u_1\le u_2 \quad \text{if} \quad ct(u_1)<_{lex} ct(u_2) \ \text{or} \  u_1=u_2\]
		where $\le_{lex}$ is the lexicographic order on $\mathbb{N}^{2(n_1+n_2)-2}$. Clearly $(M(S \bigcup T),\le)$ has a minimum element $1$.
	\end{defn}
	\begin{rem} Since the elements in $G(r_1,1,n_1)$ commute with elements in $G(r_2,1,n_2)$, any word can be transformed into its separation form through commuting transformations. More precisely, if $\omega$ is a word in $G$, a sequence of commuting transformations yields $\omega \leadsto \omega^\prime$ where $\omega^\prime=\omega_1 \omega_2$ is the separation form of $\omega$. Note that $\omega^\prime$ is conjugate to $\omega$ and $ct(\omega)=ct(\omega^\prime)$. If $sig(\omega)=(\alpha,\beta)$,  $sig(\omega^\prime)=(\alpha,\beta)$. Also, note that, if $sig(\omega)=(\alpha,\beta)$, $sig(\omega_1)=\alpha$ and $sig(\omega_2)=\beta$.
	\end{rem}
	\begin{thm}\label{3.10} Let $\omega$ be a word and $\omega^\prime=\omega_1 \omega_2$ be the separation form of $\omega$. By Theorem 3.2, there exist words $\tilde{\omega}_1$ and $\tilde{\omega}_2$ in echelon form such that $\omega_1 \leadsto \tilde{\omega}_1$ and $\omega_2 \leadsto \tilde{\omega}_2$. Then we have $\omega \leadsto \tilde{\omega}_1 \tilde{\omega}_2$ where $\omega$ is conjugate to $\tilde{\omega}_1 \tilde{\omega}_2$ and $ct(\omega) \ge_{lex} ct(\tilde{\omega}_1 \tilde{\omega}_2)$.
	\end{thm}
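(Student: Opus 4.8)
The plan is to realize the passage $\omega \leadsto \tilde{\omega}_1 \tilde{\omega}_2$ in three chained stages. First, by Remark 3.9 a sequence of commuting transformations brings $\omega$ to its separation form, so $\omega \leadsto \omega_1 \omega_2$ with $\omega_1 \in M(S)$, $\omega_2 \in M(T)$ and $ct(\omega)=ct(\omega_1\omega_2)$. Next I would reduce the $S$-block: since the reduction $\omega_1 \leadsto \tilde{\omega}_1$ furnished by Theorem 3.4 takes place entirely inside $M(S)$, I want to run the very same sequence of transformations on $\omega_1 \omega_2$ so as to reach $\tilde{\omega}_1 \omega_2$, keeping the $T$-block $\omega_2$ inert at the tail. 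Finally, the symmetric argument reduces the $T$-block and gives $\tilde{\omega}_1 \omega_2 \leadsto \tilde{\omega}_1 \tilde{\omega}_2$; concatenating the three stages and using transitivity of $\leadsto$ yields $\omega \leadsto \tilde{\omega}_1 \tilde{\omega}_2$.

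The heart of the matter is that Theorem 3.4 produces its echelon form using circular transformations, and a circular transformation (\ref{2.10}) rotates the \emph{entire} word rather than a chosen block, so the reduction of $\omega_1$ cannot be transplanted verbatim onto $\omega_1\omega_2$. The key observation is that a rotation confined to the $S$-block can nonetheless be \emph{simulated} once commutation between $S$- and $T$-letters is available. Concretely, if the current word is $W\omega_2$ with $W=\delta_{i_1}\cdots\delta_{i_N}\in M(S)$, $\omega_2\in M(T)$, and Theorem 3.4 calls for the rotation $W\mapsto \delta_{i_{k+1}}\cdots\delta_{i_N}\delta_{i_1}\cdots\delta_{i_k}$, then I apply the genuine circular transformation (\ref{2.10}) to the full word, reaching $\delta_{i_{k+1}}\cdots\delta_{i_N}\,\omega_2\,\delta_{i_1}\cdots\delta_{i_k}$, and then commuting transformations push the trailing $S$-letters $\delta_{i_1}\cdots\delta_{i_k}$ leftward across $\omega_2$, restoring $\delta_{i_{k+1}}\cdots\delta_{i_N}\delta_{i_1}\cdots\delta_{i_k}\,\omega_2$. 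The non-circular (canceling, commuting, replacement) transformations are local and leave any suffix untouched, so they transplant directly with $\omega_2$ as the inert suffix. This proves $\omega_1\omega_2 \leadsto \tilde{\omega}_1\omega_2$, and the identical device — now with $\tilde{\omega}_1\in M(S)$ serving as an inert prefix that is commuted back to the front after each global rotation — proves $\tilde{\omega}_1\omega_2 \leadsto \tilde{\omega}_1\tilde{\omega}_2$.

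It remains to record the conjugacy and the inequality $ct(\omega)\ge_{lex} ct(\tilde{\omega}_1\tilde{\omega}_2)$. These follow from the analogue of Lemma 2.9 for $M(S\bigcup T)$: every admissible transformation in $G$ replaces a word by a conjugate word and never increases $ct$ in the lexicographic order, checked transformation by transformation exactly as in Lemma 2.9 (the commuting moves between $S$- and $T$-letters preserve both $ct$ and the conjugacy class, while circular transformations preserve length and permute letters cyclically, hence are conjugations). Chaining the three stages then delivers $\omega \leadsto \tilde{\omega}_1\tilde{\omega}_2$ together with both conclusions. The main obstacle, and the only genuinely non-formal point, is the circular-transformation simulation above; everything else is bookkeeping inherited from Remark 3.9 and the single-factor echelon theorem.
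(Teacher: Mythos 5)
Your proposal is correct and follows essentially the same route as the paper's proof: non-circular transformations transplant directly with $\omega_2$ as an inert suffix, while each rotation of the $S$-block is simulated by a genuine circular transformation on the whole word followed by commuting transformations that push the displaced $S$-letters back across $\omega_2$ (and symmetrically for the $T$-block), after which conjugacy and $ct(\omega)\ge_{lex} ct(\tilde{\omega}_1\tilde{\omega}_2)$ are read off transformation by transformation. The circular-transformation simulation you single out as the key point is precisely the central observation in the paper's argument.
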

	\begin{proof} If $\omega_1$ is transformed into $\omega_1^\prime$ through a sequence of commuting, canceling and replacement transformations, then the same sequence of transformations yields $\omega_1 \omega_2 \leadsto \omega_1^\prime \omega_2$. Note that if $\delta_{i_1} \cdots \delta_{i_k}\delta_{i_{k+1}} \cdots \delta_{i_N} \omega_2$ is transformed into $ \delta_{i_{k+1}} \cdots \delta_{i_N} \omega_2 \delta_{i_{1}} \cdots \delta_{i_k}$ through a circular transformation, then $ \delta_{i_{k+1}} \cdots \delta_{i_N} \omega_2 \delta_{i_{1}} \cdots \delta_{i_k}$ can be transformed into  $ \delta_{i_{k+1}} \cdots \delta_{i_N} \delta_{i_{1}} \cdots \delta_{i_k} \omega_2$ through a sequence of commuting transformations. Thus, if $\omega_1$ is transformed into $\omega_1^{\prime \prime}$ through a sequence of circular transformations, then $\omega_1 \omega_2$ can be transformed into $\omega_1^{\prime \prime} \omega_2$ through a sequence of circular and commuting transformations. Therefore, through a sequence of admissible transformations, we get $\omega_1 \omega_2 \leadsto \tilde{\omega}_1 \omega_2$. Similarly, we have $\tilde{\omega}_1 \omega_2 \leadsto \tilde{\omega}_1 \tilde{\omega}_2$. By Remark 3.9, we have $\omega \leadsto \omega_1 \omega_2 $. Thus $\omega \leadsto \tilde{\omega}_1 \tilde{\omega}_2$. Analyzing the aforementioned process, it can be known that the word $\omega$ is conjugate to $\tilde{\omega}_1 \tilde{\omega}_2 $ and $ct(\omega) \ge_{lex} ct(\tilde{\omega}_1 \tilde{\omega}_2)$.
	\end{proof}
	\begin{rem} Let $\tilde{\omega}_1$ and $\tilde{\omega}_1^\prime$ be in echelon form within $G(r_1,1,n_1)$, $\tilde{\omega}_2$ and $\tilde{\omega}_2^\prime$ be in echelon form within $G(r_2,1,n_2)$. Note that $\tilde{\omega}_1 \tilde{\omega}_2$, $\tilde{\omega}_1^\prime \tilde{\omega}_2^\prime \in G$, they can be recovered  from
		$ct(\tilde{\omega}_1 \tilde{\omega}_2)$ and
		$ct(\tilde{\omega}_1^\prime \tilde{\omega}_2^\prime)$, respectively.
	\end{rem}
	\begin{thm} Let $G=G(r_1,1,n_1)\times G(r_2,1,n_2)$. Generating sets of $G_1,G_2,G$ are $S,T$,and $S\bigcup T$, respectively, as in Definition 2.1. Sets $S=\{s_1,t_1,$ $\cdots,s_{n_1-1},t_{n_1-1},t_{n_1}\}$ and $T=\{s_1^\prime,t_1^\prime,\cdots,$ $s_{n_2-1}^\prime,t_{n_2-1}^\prime,t_{n_2}^\prime\}$. For two finite dimensional complex linear representations $\rho_1$ and $\rho_2$ of $G$, if $$d(S \bigcup T,\rho_1)=d(S \bigcup T,\rho_2),$$ we have $\rho_1\cong \rho_2$.
	\end{thm}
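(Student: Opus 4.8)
The plan is to transcribe, almost verbatim, the architecture of the proof of Theorem 3.6, with the single echelon form there replaced by the separated echelon form $\tilde{\omega}_1\tilde{\omega}_2$ produced in Theorem~\ref{3.10}. The reverse implication is immediate and I would dispatch it in one line: if $\rho_1\cong\rho_2$, conjugating the pencil $x_0 I+\sum_i x_i\rho_j(\cdot)$ by the intertwining matrix leaves its determinant unchanged, so $d(S\cup T,\rho_1)=d(S\cup T,\rho_2)$. For the stated (forward) direction, I would first record the analog of Theorem 2.4 for the generating set $S\cup T$: applying the spectral mapping theorem to the pencil over all generators in $S\cup T$ and comparing coefficients of the resulting trace polynomials in the $x_i$ yields, for every signature $(\alpha,\beta)\in\mathbb{N}^{2(n_1+n_2)-2}$,
\[
\sum_{sig(\omega)=(\alpha,\beta)} \chi_{\rho_1}(\omega)=\sum_{sig(\omega)=(\alpha,\beta)} \chi_{\rho_2}(\omega).
\]
This step is routine and uses nothing about the product structure beyond the definition of the signature in Definition 3.8.

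Since characters are class functions and determine a representation up to equivalence, it then suffices to prove $Tr(\rho_1(\omega))=Tr(\rho_2(\omega))$ for every word $\omega\in M(S\cup T)$, and I would do this by induction on the partial order of Definition 3.8. The base case $\omega=1$ follows from the signature-sum identity at $(\alpha,\beta)=(0,\dots,0)$. For the inductive step, fix $\omega$ with $sig(\omega)=(\alpha,\beta)$; every word $u$ of this signature has $ct(u)=ct(\omega)$, and passing to its separated echelon form $\tilde{u}=\tilde{u}_1\tilde{u}_2$ via Theorem~\ref{3.10} gives $u$ conjugate to $\tilde{u}$ with $ct(\tilde{u})\le_{lex} ct(\omega)$, whence $\chi_{\rho_j}(u)=\chi_{\rho_j}(\tilde{u})$ and each $\tilde{u}$ is either strictly below $\omega$ or satisfies $ct(\tilde{u})=ct(\omega)$. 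Splitting the signature-sum identity accordingly, the terms with $\tilde{u}<\omega$ agree on both sides by the induction hypothesis and cancel; moreover, if $\tilde{\omega}<\omega$ the induction hypothesis already gives $Tr(\rho_1(\omega))=Tr(\rho_1(\tilde{\omega}))=Tr(\rho_2(\tilde{\omega}))=Tr(\rho_2(\omega))$, so one may assume $ct(\omega)=ct(\tilde{\omega})$ and is left only with the terms satisfying $ct(\tilde{u})=ct(\omega)=ct(\tilde{\omega})$.

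The crux is then the product analog of Lemma 3.5: I must show that every such $u$ is conjugate to $\omega$. Here $\tilde{u}_1,\tilde{\omega}_1$ are in echelon form in $G_1$ and $\tilde{u}_2,\tilde{\omega}_2$ in echelon form in $G_2$, so the equality $ct(\tilde{u}_1\tilde{u}_2)=ct(\tilde{\omega}_1\tilde{\omega}_2)$ forces $\tilde{u}_1\tilde{u}_2=\tilde{\omega}_1\tilde{\omega}_2$ as words by Remark 3.11, and hence, by Lemma 2.9, $u$ is conjugate to $\omega$. Consequently $Tr(\rho_j(\tilde{u}))=Tr(\rho_j(\omega))$ for all these $u$, the remaining sum collapses to $c\,Tr(\rho_j(\omega))$ with $c=|\{u: sig(u)=(\alpha,\beta),\, ct(\tilde{u})=ct(\omega)\}|\ge 1$, and dividing $c\,Tr(\rho_1(\omega))=c\,Tr(\rho_2(\omega))$ by $c$ closes the induction. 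I expect the only genuine obstacle to be the bookkeeping in this last step: one must verify that the longer $ct$-tuple of a product word recovers the two echelon blocks $\tilde{\omega}_1$ and $\tilde{\omega}_2$ independently through Remark 3.11 — this is exactly where the separation form, rather than merely the total length, is indispensable — while everything else carries over unchanged from the proof of Theorem 3.6.
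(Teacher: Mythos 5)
Your proposal is correct and follows essentially the same route as the paper's own proof: the signature-sum identity obtained from the spectral mapping argument, induction on the order of Definition 3.8, reduction of each word to the separated echelon form $\tilde{u}_1\tilde{u}_2$ via Theorem 3.10, identification $\tilde{u}_1\tilde{u}_2=\tilde{\omega}_1\tilde{\omega}_2$ through Remark 3.11, and collapse of the remaining sum to $c\,Tr(\rho_j(\omega))$. The only cosmetic difference is that you also record the immediate converse, which the paper states separately as Corollary 3.13.
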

	\begin{proof} If $d(S \bigcup T,\rho_1)=d(S \bigcup T,\rho_2)$, by Theorem 2.2, we have
		\begin{equation}
			\sum_{sig(\omega)=(\alpha,\beta)} \chi_{\rho_1}(\omega)=\sum_{sig(\omega)=(\alpha,\beta)} \chi_{\rho_2}(\omega)
		\end{equation}
		for any $(\alpha,\beta) \in \mathbb{N}^{2(n_1+n_2)-2}$.
		
		Similar to the proof of the Theorem 3.6, it is suffice to show by induction on the partial ordering of words that for each word $\omega$ in $G$ we have
		\[Tr(\rho_1(\omega))=Tr(\rho_2(\omega)).\]
		When $\omega=1$, $sig(\omega)=(0,\cdots,0) \in \mathbb{N}^{2(n_1+n_2)-2}$. Then we have
		\[Tr(\rho_1(1))=\sum_{sig(u)=(0,\cdots,0)} \chi_{\rho_1}(u)=\sum_{sig(u)=(0,\cdots,0)} \chi_{\rho_2}(u)=Tr(\rho_2(1)).\]
		Suppose $\omega\neq1$ and we have proved our statement for all words lower than $\omega$. Let $sig(\omega)=(\alpha,\beta)$. For each word $u$ with $sig(u)=(\alpha,\beta)$, let $\tilde{u}_1 \tilde{u}_2$ be a word such that $u \leadsto \tilde{u}_1\tilde{u}_2 $, according to Theorem \ref{3.10}. Thus, by Theorem 3.10, we have $ct(\omega)=ct(u) \ge_{lex} ct(\tilde{u}_1 \tilde{u}_2)$. Let
		\[c=\lvert \{u\ |sig(u)=(\alpha,\beta) \  \text{and}\   ct(\tilde{u}_1 \tilde{u}_2)=ct(\omega)\}\rvert .\]
		Since $u$ and $\tilde{u}_1 \tilde{u}_2$ belong to the same conjugacy class in $G$, we can rewrite $(3.3)$ as follows:
		\begin{eqnarray}\label{3.4}
			\sum_{\substack{sig(u)=(\alpha,\beta)\\
					\tilde{u}_1 \tilde{u}_2<\omega}} Tr(\rho_1(\tilde{u}_1 \tilde{u}_2))
			+\sum_{\substack{sig(u)=(\alpha,\beta)\\
					ct(\tilde{u}_1 \tilde{u}_2)=ct(\omega)}} Tr(\rho_1(\tilde{u}_1 \tilde{u}_2))\\
			=\sum_{\substack{sig(u)=(\alpha,\beta)\\
					\tilde{u}_1 \tilde{u}_2<\omega}} Tr(\rho_2(\tilde{u}_1 \tilde{u}_2))
			+\sum_{\substack{sig(u)=(\alpha,\beta)\\
					ct(\tilde{u}_1 \tilde{u}_2)=ct(\omega)}} Tr(\rho_2(\tilde{u}_1 \tilde{u}_2)).
		\end{eqnarray}
		If $\tilde{\omega}_1 \tilde{\omega}_2<\omega$, by induction hypothesis, it follows that
		\[Tr(\rho_1(\omega))=Tr(\rho_1(\tilde{\omega}_1 \tilde{\omega}_2))=Tr(\rho_2(\tilde{\omega}_1 \tilde{\omega}_2))=Tr(\rho_2(\omega)),\]
		so we may assume that $ct(\omega)=ct(\tilde{\omega}_1 \tilde{\omega}_2).$ Also, we see that $ct(\tilde{u}_1^\prime \tilde{u}_2^\prime)=ct(\omega)=ct(\tilde{\omega}_1 \tilde{\omega}_2),$ if $sig(u^\prime)=(\alpha,\beta)$ and $ct(\tilde{u}_1^\prime \tilde{u}_2^\prime)=ct(\omega)$.  Therefore, by Remark 3.11, in all possible cases for $u$ with $sig(u)=(\alpha,\beta)$ and $ct(\tilde{u}_1 \tilde{u}_2)=ct(\omega)$ we must have $\tilde{u}_1 \tilde{u}_2=\tilde{\omega}_1 \tilde{\omega}_2$ and $\tilde{u}_1 \tilde{u}_2$ in the same conjugacy class as $\omega$. Therefore, we have \[\sum_{\substack{sig(u)=(\alpha,\beta)\\
				ct(\tilde{u}_1 \tilde{u}_2)=ct(\omega)}} Tr(\rho_i(\tilde{u}_1 \tilde{u}_2)) = c\, Tr(\rho_i(\omega)),
		\]for $i=1,2$. Since by the induction hypothesis $Tr(\rho_1(\tilde{u}_1 \tilde{u}_2))=Tr(\rho_2(\tilde{u}_1 \tilde{u}_2))$ whenever $\tilde{u}_1 \tilde{u}_2<\omega$, the formula (\ref{3.4}) reduces to \[c\, Tr(\rho_1(\omega))=c \, Tr(\rho_2(\omega)),\] and the desired conclusion holds immediate.
	\end{proof}
	If $\rho_1\cong \rho_2$, it is easy to see that $d(S \bigcup T,\rho_1)=d(S \bigcup T,\rho_2)$. Consequently, we have:
	\begin{cor} Let $G=G(r_1,1,n_1)\times G(r_2,1,n_2)$. Generating sets of $G_1,G_2,G$ are $S,T,S\bigcup T$ respectively as in Definition 2.1. For two finite dimensional complex linear representations $\rho_1$ and $\rho_2$ of $G$,  $d(S \bigcup T,\rho_1)=d(S \bigcup T,\rho_2)$ if and only if $\rho_1\cong \rho_2$.
	\end{cor}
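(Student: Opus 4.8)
The plan is to obtain the corollary by assembling the two halves of the stated equivalence. The substantive half---that equality of the two characteristic polynomials forces $\rho_1 \cong \rho_2$---is exactly Theorem 3.12, already established above through the echelon-form machinery of this section. Thus the only thing I would need to add is the converse implication, the routine direction, which rests solely on the conjugation-invariance of the determinant.

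For the converse, I would suppose $\rho_1 \cong \rho_2$ and pick an invertible matrix $X$ with $\rho_2(g) = X^{-1}\rho_1(g) X$ for every $g \in G$. Writing $g_1, \ldots, g_N$ for an enumeration of the generating set $S \cup T$ (so that $N = 2(n_1+n_2)-2$) and $x_0, x_1, \ldots, x_N$ for the pencil variables, linearity together with the homomorphism property gives
\[
x_0 I + \sum_{j=1}^{N} x_j\,\rho_2(g_j) = X^{-1}\Bigl(x_0 I + \sum_{j=1}^{N} x_j\,\rho_1(g_j)\Bigr) X.
\]
Taking determinants and invoking $\det(X^{-1} M X) = \det(M)$ then yields $d(S\cup T, \rho_2) = d(S\cup T, \rho_1)$. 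This is precisely the computation performed at the opening of the proof of Theorem 3.6, now read off for the generating set $S \cup T$ of the product $G$.

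Combining the two implications would complete the biconditional. I do not expect a genuine obstacle at this stage, since all of the real difficulty---transforming an arbitrary word of $G$ into a separation form $\tilde\omega_1\tilde\omega_2$ that is in echelon form, controlling the word order through the statistic $ct$, and matching conjugacy classes via Remark 3.11---has already been discharged inside Theorems 3.10 and 3.12. The corollary is simply the packaging of that hard implication together with its elementary converse.
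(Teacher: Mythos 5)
Your proposal is correct and matches the paper exactly: the paper likewise obtains this corollary by combining Theorem 3.12 for the substantive implication with the remark, stated just before the corollary, that $\rho_1\cong\rho_2$ trivially gives $d(S\cup T,\rho_1)=d(S\cup T,\rho_2)$ via the conjugation-invariance computation spelled out at the start of the proof of Theorem 3.6. Nothing is missing.
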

	Similar to by induction $G=G(r_1,1,n_1)\times G(r_2,1,n_2)$, we can obtain the following corollary.
	
	\begin{cor} Let $G=G(r_1,1,n_1) \times \cdots \times G(r_k,1,n_k)$. Generating sets of $G_1,\cdots,G_k,G$ are $S_1,\cdots,S_k$, and $S_1\bigcup \cdots \bigcup S_k$, respectively. For two finite dimensional complex linear representations $\rho_1$ and $\rho_2$ of $G$,  $d(S_1\bigcup \cdots \bigcup S_k,\rho_1)$ $=d(S_1\bigcup \cdots \bigcup S_k,\rho_2)$ if and only if $\rho_1\cong \rho_2$.
	\end{cor}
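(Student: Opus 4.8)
The plan is to induct on the number $k$ of factors, using Theorem 3.6 (the case $k=1$) and Corollary 3.13 (the case $k=2$) as the base, and to carry along through the induction not merely the equivalence of representations but the full normal-form package that made the two-factor argument work. The reverse direction is immediate and identical to the first half of Theorem 3.6: if $\rho_2(\omega)=X^{-1}\rho_1(\omega)X$ for all $\omega\in G$, then conjugation invariance of the determinant gives $d(S_1\cup\cdots\cup S_k,\rho_1)=d(S_1\cup\cdots\cup S_k,\rho_2)$. So all the content lies in the forward implication.

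For the inductive step I would set $H=G_1\times\cdots\times G_{k-1}$ and write $G=H\times G_k$, viewing $G$ as a product of two commuting factors, and first extend the structural results of Subsection 3.2 from two blocks to $k$ blocks. Concretely, as in Remark 3.9 the generators of distinct factors commute, so a sequence of commuting transformations separates any word $\omega\in G$ as $\omega\leadsto\omega'\,\omega_k$ with $\omega'\in M(S_1\cup\cdots\cup S_{k-1})$ and $\omega_k\in M(S_k)$. By the induction hypothesis $\omega'\leadsto\tilde\omega_1\cdots\tilde\omega_{k-1}$, a normal form conjugate to $\omega'$ and recoverable from its $ct$ vector, while Theorem 3.4 gives $\omega_k\leadsto\tilde\omega_k$ in echelon form. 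Running the bookkeeping of Theorem 3.10 then yields $\omega\leadsto\tilde\omega_1\cdots\tilde\omega_k$, where the concatenation is conjugate to $\omega$, satisfies $ct(\omega)\ge_{lex}ct(\tilde\omega_1\cdots\tilde\omega_k)$, and---since each block is recovered from its own part of the combined count data, exactly as in Remark 3.11---is recoverable from $ct(\tilde\omega_1\cdots\tilde\omega_k)$.

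With this $k$-block normal form established, the forward direction is a transcription of the proof of Theorem 3.12. Equality of the characteristic polynomials together with the spectral-mapping argument of Theorem 2.4 gives, for every combined signature $(\alpha_1,\ldots,\alpha_k)$,
$$\sum_{sig(\omega)=(\alpha_1,\ldots,\alpha_k)}\chi_{\rho_1}(\omega)=\sum_{sig(\omega)=(\alpha_1,\ldots,\alpha_k)}\chi_{\rho_2}(\omega).$$
One then shows $Tr(\rho_1(\omega))=Tr(\rho_2(\omega))$ for every word $\omega$ by induction on the partial order $\le$ on $M(S_1\cup\cdots\cup S_k)$: replacing each $u$ in a fixed signature class by its conjugate normal form $\tilde u_1\cdots\tilde u_k$, the terms with $\tilde u_1\cdots\tilde u_k<\omega$ match by the induction hypothesis, while the terms with $ct(\tilde u_1\cdots\tilde u_k)=ct(\omega)$ all share the normal form $\tilde\omega_1\cdots\tilde\omega_k$ by the recovery statement, so each lies in the conjugacy class of $\omega$ and contributes $Tr(\rho_i(\omega))$. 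The identity collapses to $c\,Tr(\rho_1(\omega))=c\,Tr(\rho_2(\omega))$ for a positive integer $c$, and since characters determine representations we conclude $\rho_1\cong\rho_2$.

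The genuine obstacle is not the trace bookkeeping---which is copied verbatim from Theorem 3.12---but verifying that the normal-form package truly propagates from $k-1$ to $k$ blocks: that separation into blocks is achievable purely by commuting transformations, that the order $ct$ and the recovery statement of Remark 3.11 remain valid when several echelon blocks are concatenated, and that conjugacy of a word to its normal form is preserved throughout. Once these structural points are confirmed, the grouping $G=(G_1\times\cdots\times G_{k-1})\times G_k$ reduces the inductive step to exactly the two-factor situation of Corollary 3.13.
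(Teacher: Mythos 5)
Your proposal is correct and follows exactly the route the paper intends: the paper justifies this corollary with the single remark that it follows ``by induction'' from the two-factor case, and your write-up fleshes out precisely that induction --- grouping $G=(G_1\times\cdots\times G_{k-1})\times G_k$, propagating the separation/echelon normal-form package of Theorem 3.10 and Remark 3.11 (correctly strengthening the induction hypothesis to carry the normal form, not just the representation equivalence), and then transcribing the trace bookkeeping of Theorem 3.12. No gaps; your identification of the structural extension of the $ct$-recovery statement to $k$ concatenated blocks as the only real content is exactly right.
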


	\section{Characteristic polynomials for affine Coxeter groups and $G(r,p,n)$}

\subsection{Characteristic polynomials for Affine coxeter groups}
	
	\hspace{1.3em}We begin with reviewing key results associated to affine Coxeter groups and basic concepts in the representation theory of finite groups which can be found in \cite{M1998, S1977}.
	
\begin{defn} A Coxeter group $W$ is generated by a subset $S$ and subjected to the following relations; \[W=\langle s \in S \lvert (ss^\prime)^{m_{ss^\prime}}=1 \rangle,\]where
    \[m_{ss}=1 \,\, \text{and} \,\, m_{ss^\prime}\in \lbrace 2,3,\cdots \rbrace \cup \lbrace \infty \rbrace \,\,\, \text{if} \,\, s \neq s^\prime.\]
	\end{defn}
In particular, the relations $m_{ss^\prime}=1$ asserts that each $s$ is an involution. The pair $(W,S)$ is called a \textit{Coxeter system}.
\begin{rem} An important class of Coxeter groups are the \textit{affine Coxeter groups}. A Coxeter group is called affine, if it can be represented as a reflection group on $\mathbb{R}^n$ with no common invariant subspace with dimension less than $n$. Every affine Coxeter group arises from a finite Coxeter group in the following way: Let $\Phi$ be a crystallographic root system for a finite Coxeter group $W$ (which exists exactly for the so-called finite Weyl groups). We define an affine reflection as follows:
\[s_{\alpha,k}(v):=v- ( \langle v,\alpha \rangle -k ) \dfrac{2\alpha}{\langle \alpha,\alpha \rangle} \,\,\, \text{for}\,\, k\in \mathbb{R},\, \alpha \in \Phi.\]
This is indeed a reflection on the affine hyperplane \[H_{\alpha,k}:=\lbrace v \in V \lvert \langle v,\alpha \rangle =k \rbrace.\] The group generated by all those reflections(which include the reflections $s_\alpha=s_{\alpha,0}$ of $W$ ) is denoted by $\widetilde{W}$. Furthermore, if we start with an \textit{irreducible} finite Coxeter system $(W,S)$ (meaning that the Coxeter diagram of $W$ is connected), we can find an \textit{irreducible} root system $\Phi$ for $W$, i.e. there is no partition of $\Phi$ into root systems such that any two roots in different subsets are orthogonal. Given an irreducible root system, there is a root $\widetilde{\alpha}$ such that for any root $\alpha$, the root $\widetilde{\alpha}-\alpha$ is a sum of simple roots. This root is unique and we call it the \textit{highest root}. The affine group $\widetilde{W}$ is generated by $\widetilde{S}=S \cup \{s_{\widetilde{\alpha},1}\}$ and $(\widetilde{W},\widetilde{S})$ is the corresponding affine Coxeter system.
\end{rem}
Now we talk about \textit{words}
within a given group $G$ as discussed in Section 2. Let $S=\{s_1,\cdots,s_n\}$ be a subset of $G$. $M(S)$ is a monoid generated by alphabet $S$. There is a natural homomorphism
\[\phi:M(S) \longrightarrow G.\]
For a word $\omega \in M(S)$, the \textit{signature} of $\omega$ is \[sig(\omega)=(a_1,\cdots,a_n) \in \mathbb{N}^{n}.\]
For any $\rho \in rep(G)$, the \textit{characteristic polynomial} of $\rho$ is defined as
 \[d(S,\rho)=det(x_0 I+\sum_{i=1}^n x_i \rho(s_i)).\]

Analogous to Theorem 2.4, we have the following lemma.
    \begin{lemma}Let $G$ be a affine Coxeter group. Let $\rho_1,\rho_2 \in rep(G)$. If $d(S,\rho_1)=d(S,\rho_2)$, then for each given $\alpha \in \mathbb{N}^{n}$, we have\[\sum_{sig(\omega)=\alpha} \chi_{\rho_1}(\omega)=\sum_{sig(\omega)=\alpha} \chi_{\rho_2}(\omega),\]
\textit{where $\chi_{\rho_i}$ denotes the character of representation $\rho_i$, for $i=1,2$.}
\end{lemma}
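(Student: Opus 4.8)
The plan is to transcribe verbatim the argument of Theorem 2.4, since the only structural change is that the generating set $S=\{s_1,\dots,s_n\}$ now records a single count per generator, so signatures lie in $\mathbb{N}^n$ rather than $\mathbb{N}^{2n-1}$. Throughout write $M_j(x)=\sum_{i=1}^n x_i\rho_j(s_i)$ for $j=1,2$, where $x=(x_1,\dots,x_n)$.

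First I would reinterpret the hypothesis spectrally. Regarding the variable $x_0$ as a spectral parameter, the equality $d(S,\rho_1)=d(S,\rho_2)$ of polynomials in $(x_0,x_1,\dots,x_n)$ is equivalent to
\[\det(\lambda I - M_1(x))=\det(\lambda I - M_2(x))\]
for every $x\in\mathbb{C}^n$; that is, $M_1(x)$ and $M_2(x)$ have the same characteristic polynomial, hence the same spectrum counted with multiplicity, for each fixed $x$.

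Next I would invoke the spectral mapping theorem exactly as in Theorem 2.4: since $M_1(x)$ and $M_2(x)$ have equal spectra, so do $M_1(x)^m$ and $M_2(x)^m$ for every $m\in\mathbb{N}_+$, whence $\tr(M_1(x)^m)=\tr(M_2(x)^m)$ for all $x$. Expanding the power gives $\tr(M_j(x)^m)=\sum_{(i_1,\dots,i_m)} x_{i_1}\cdots x_{i_m}\,\chi_{\rho_j}(s_{i_1}\cdots s_{i_m})$, and because the $x_i$ commute the coefficient of a fixed monomial $x_1^{a_1}\cdots x_n^{a_n}$ with $\sum_i a_i=m$ is precisely $\sum_{sig(\omega)=\alpha}\chi_{\rho_j}(\omega)$, where $\alpha=(a_1,\dots,a_n)$. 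Since the trace identity holds for all complex $x$, it is an identity of polynomials, so I may equate coefficients monomial by monomial; letting $m$, and hence $\alpha$, range over all of $\mathbb{N}^n$ yields the asserted equality.

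This is a routine adaptation rather than a new argument, so I do not expect a genuine obstacle; the only point deserving care is the bookkeeping in the final step—verifying that collecting the noncommutative product $\bigl(\sum_i x_i\rho_j(s_i)\bigr)^m$ according to the commuting monomials in $x$ corresponds exactly to grouping words $\omega$ by their signature $\alpha$, and that the passage from ``equal for all $x\in\mathbb{C}^n$'' to ``equal as polynomials'' is legitimate because both sides are polynomials agreeing on all of $\mathbb{C}^n$.
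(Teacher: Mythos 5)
Your proposal is correct and follows exactly the route the paper intends: the paper states this lemma without a separate proof, citing it as ``analogous to Theorem 2.4,'' and your argument is precisely that proof (equality of characteristic polynomials for all $x$, spectral mapping theorem to get $\mathrm{Tr}(M_1(x)^m)=\mathrm{Tr}(M_2(x)^m)$, then expanding the noncommutative power and equating coefficients of monomials grouped by signature). Your added care about the polynomial-identity step and the bookkeeping of words by signature is sound and matches the paper's reasoning.
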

	\hspace{1.3em} We will seek a suitable subset $S$ of the affine Coxeter group to define the characteristic polynomial and prove that for every finite dimensional representation of  affine Coxeter group $\widetilde{W}$, the characteristic polynomial of $\widetilde{W}$ determines the character of this representation. Before proving these, we first prove a theorem, which plays a crucial role in proving our main conclusion. In the following theorem, we consider an arbitrary group $G=H\ltimes N$, where $N$ is abelian. Here $\omega_1 \thicksim \omega_2$ means that $\omega_1$ is conjugate to $\omega_2$.
\begin{thm}Let $G=H\ltimes N$, where $N$ is abelian. Let $\{n_1,\cdots,n_k\}$ be a set of generators of N, $\{h_1,\cdots,h_t\}$ be a set which represents every conjugacy class of H, and let the set $S=\{h_1,\cdots,h_t,n_1,\cdots,n_k, n_1^{-1},\cdots,n_k^{-1}\}$. Then, for two finite dimensional complex linear representations $\rho_1$ and $\rho_2$ of $G$, $d(S,\rho_1)=d(S,\rho_2)$ if and only if $\rho_1\cong \rho_2$.
\end{thm}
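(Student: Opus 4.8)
The plan is to prove the two implications separately, with essentially all the work in the forward direction. The reverse implication is immediate: if $\rho_1\cong\rho_2$, say $\rho_2(\cdot)=X^{-1}\rho_1(\cdot)X$, then $x_0I+\sum_{s\in S}x_s\rho_2(s)=X^{-1}\bigl(x_0I+\sum_{s\in S}x_s\rho_1(s)\bigr)X$, so the two pencils have equal determinant and $d(S,\rho_1)=d(S,\rho_2)$. For the forward direction, since a finite dimensional representation is determined up to equivalence by its character, it suffices to show $\chi_{\rho_1}(g)=\chi_{\rho_2}(g)$ for every $g\in G$. Writing $\Delta=\chi_{\rho_1}-\chi_{\rho_2}$, a class function on $G$, the goal becomes $\Delta\equiv 0$. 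The first step is to record, exactly by the argument of Lemma 4.3 (whose proof uses only the spectral mapping theorem and applies verbatim to any finitely generated group), that $d(S,\rho_1)=d(S,\rho_2)$ forces
\[\sum_{sig(\omega)=\alpha}\Delta(\omega)=0\qquad\text{for every }\alpha\in\mathbb{N}^{t+2k}.\]

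The heart of the argument is to choose signatures $\alpha$ for which the set $\{\omega: sig(\omega)=\alpha\}$ collapses, up to conjugacy, onto a single element, so that the vanishing sum pins down one value of $\Delta$. I would first treat signatures supported only on the $N$-letters $n_1^{\pm1},\dots,n_k^{\pm1}$: since $N$ is abelian, every such word represents one and the same element $n_\alpha\in N$ (the ordered product is order-independent), so the sum is a positive multiple of $\Delta(n_\alpha)$ and hence $\Delta(n_\alpha)=0$. As $\alpha$ ranges over these signatures $n_\alpha$ ranges over all of $N$, giving $\Delta|_N\equiv0$. The key observation is then the following: fix $i$ and take any signature $\alpha$ with $a_i=1$, all other $h$-entries zero, and arbitrary $N$-content. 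Every word with this signature has the shape $\omega=A\,h_i\,B$ with $A,B$ words in the $N$-letters, and conjugating by $A^{-1}$ together with the commutativity of $N$ gives
\[\omega=A\,h_i\,B\thicksim h_i\,BA=h_i(AB)=h_i\,n_\alpha,\]
where $n_\alpha=AB\in N$ is the product of all $N$-letters of $\alpha$ and is the same for every such $\omega$. Thus all these words lie in a single conjugacy class, the sum is a positive multiple of $\Delta(h_i n_\alpha)$, and we conclude $\Delta(h_i n_\alpha)=0$ for every $i$ and every $n_\alpha\in N$. Remarkably, signatures with at most one $H$-letter already suffice.

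To finish, I would invoke the defining property of $\{h_1,\dots,h_t\}$, namely that it meets every conjugacy class of $H$. Any $g\in G$ may be written $g=hn$ with $h\in H$ and $n\in N$; choosing $k\in H$ with $khk^{-1}=h_i$ gives $g=k h_i k^{-1} n\thicksim h_i(k^{-1}nk)=h_i n'$ with $n'=k^{-1}nk\in N$ (using $N\triangleleft G$), and therefore $\Delta(g)=\Delta(h_i n')=0$ by the previous step. Hence $\Delta\equiv0$, so $\chi_{\rho_1}=\chi_{\rho_2}$ and $\rho_1\cong\rho_2$. The main obstacle in this scheme is identifying the right family of signatures; the crucial point—and the reason the $h_i$ are taken to be conjugacy-class representatives of $H$ rather than mere generators—is twofold: a signature using a single $H$-letter has its entire word-set fused into one conjugacy class by the abelianness of $N$, and the conjugacy-class-representative condition guarantees that the resulting classes $[h_i n]$ already exhaust all of $G$. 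Everything else (the reduction to characters, the spectral-mapping input, and the easy direction) is routine.
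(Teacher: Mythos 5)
Your proof is correct and takes essentially the same route as the paper's: both rely on the signature-sum identity obtained via the spectral mapping theorem, the fusion of all words with a single $H$-letter $h_i$ and fixed $N$-content into the one conjugacy class of $h_i n_\alpha$ (cyclic conjugation plus abelianness of $N$), and the fact that since the $h_i$ represent all conjugacy classes of $H$, the elements $h_i n$ exhaust $G$ up to conjugacy. The only differences are organizational — you establish $\Delta(h_i n)=0$ for all pairs $(i,n)$ first and then exhaust conjugacy classes, while the paper starts from an arbitrary $\omega\in G$ and conjugates it into the form $h_i\,uvu^{-1}$ via $G/N\cong H$ — and your writing $g=kh_ik^{-1}n$ after choosing $k$ with $khk^{-1}=h_i$ is a harmless notational slip (the conjugator should be inverted), immaterial to the argument.
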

\begin{proof}
If two representations $(\rho_1,V_1)$ and $(\rho_2,V_2)$ of a group $G$ are called equivalent, then there is an isomorphism $X:V_1 \rightarrow V_2$ such that for all $\omega \in G$ we have \[\rho_1(\omega)=X^{-1} \rho_2(\omega)X.\]
From this, it is easy to see that $d(S,\rho_1)=d(S,\rho_2)$.

If $d(S,\rho_1)=d(S,\rho_2)$, according to Lemma 4.3, we have
\begin{equation}
    \sum_{sig(\omega)=\alpha} \chi_{\rho_1}(\omega)=\sum_{sig(\omega)=\alpha} \chi_{\rho_2}(\omega).
\end{equation}
Since $G=H\ltimes N$, it is known that $G/N \cong H$. The set $\{h_1,\cdots,h_t\}$ represents every conjugacy classes of $H$. Thus, $\{h_1 N,\cdots,h_t N\}$ be a set which represents every conjugacy classes of $G/N$. Let $\omega \in G$, then there must exist an $i \in [t]$ such that $\omega N$ and $h_i N$ are conjugate. Then there exists a $u\in G$ such that \[\omega N= (u N)^{-1} h_i N (u N)=u^{-1}h_i u N.\]
Therefore, there exists a $v \in N$ such that\[\omega=u^{-1}h_i u v \thicksim h_i u v u^{-1}.\]
Since $N$ is a normal abelian subgroup, \[uvu^{-1}=n_1^{b_1}\cdots n_k^{b_k} (n_1^{-1})^{c_1} \cdots (n_k^{-1})^{c_k}.\]
(Note that for the same $j$, $b_j$ and $c_j$ cannot both be non-zero at the same time; otherwise, there would be infinitely many values of $b_j$ and $c_j$ that satisfy the equation above.)

Let \[\alpha=sig(h_in_1^{b_1}\cdots n_k^{b_k} (n_1^{-1})^{c_1} \cdots (n_k^{-1})^{c_k}),\  E_{\alpha}=\{u \in M(S):sig(u)=\alpha\}.\] Select a word from $E_{\alpha}$. This word has form $\omega_1h_i \omega_2$, where $\omega_1$ and $\omega_2$ are words comprised of letters $n_j$ and $n_j^{-1}$, $j=1,\cdots,k$ with the total number of $n_j$ in both words being equal to $b_j$ and the total number of $n_j^{-1}$ being equal to $c_j$. Now we see that \[\omega_1 h_i \omega_2 \thicksim h_i \omega_2 \omega_1=h_in_1^{b_1}\cdots n_k^{b_k} (n_1^{-1})^{c_1} \cdots (n_k^{-1})^{c_k},\]
where the last equality follows from the fact that $N$ is abelian. Equation (4.1) implies that\[\lvert E_{\alpha}\rvert \chi_{\rho_1}(\omega)=\lvert E_{\alpha}\rvert \chi_{\rho_2}(\omega).\]
Therefore, we have $\chi_{\rho_1}=\chi_{\rho_2}$. Furthermore, we have $\rho_1 \cong \rho_2$.
\end{proof}
\begin{rem}In fact, there is some redundancy in defining the characteristic polynomial with \(S\). Since \(h_1, \ldots, h_t\) are representatives of all conjugacy classes of $G$, one of them must satisfy \(h_i = 1\). If we now define the characteristic polynomial of $G$ using \(\widetilde{S} = S \setminus \{1\}\), and employ a similar method in the proof of the above, the conclusion of Theorem 4.4 still holds.	
\end{rem}
Let $\widetilde{W}$ be an irreducible affine Coxeter group. The affine Coxeter group $\widetilde{W}$ can be regarded as a semidirect product $W \ltimes T$ of the root lattice $T$ by the corresponding finite Coxeter group $W$. $T$ is a finitely generated normal abelian subgroup of $\widetilde{W}$, and $\{\alpha_1,\cdots,\alpha_n\}$ is its set of generators. $W$ is a finite irreducible Coxeter group, hence the number of conjugacy classes of $W$ is finite.

Let $S_1$ be a set which represents every conjugcy clss of $W$. Sets $S_2=\{\alpha_1,\cdots,\alpha_n\}$ and $S_2^{-1}=\{\alpha_1^{-1},\cdots,\alpha_n^{-1}\}$. If we take $S=S_1 \bigcup S_2 \bigcup S_2^{-1}$, then by Theorem 4.4, we can immediately derive the following corollary.
\begin{cor}Let $\widetilde{W}$ be an irreducible affine Coxeter group. For two finite dimensional complex linear representations $\rho_1$ and $\rho_2$ of $\widetilde{W}$, $d(S,\rho_1)=d(S,\rho_2)$ if and only if $\rho_1\cong \rho_2$.
\end{cor}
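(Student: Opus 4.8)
The plan is to deduce the statement directly from Theorem 4.4 by specializing the abstract semidirect-product setup $G = H \ltimes N$ to the concrete decomposition of the affine Coxeter group, as already foreshadowed by the discussion preceding the corollary. First I would invoke the classical structure theory of irreducible affine Coxeter groups (see \cite{M1998}) to write $\widetilde{W} = W \ltimes T$, where $W$ is the underlying finite irreducible Coxeter group and $T$ is the root lattice. The structural facts I need are precisely the hypotheses imposed on $N$ in Theorem 4.4: that $T$ is a normal subgroup of $\widetilde{W}$, that $T$ is abelian (indeed $T \cong \mathbb{Z}^n$ as an additive lattice), and that $T$ is finitely generated, with $S_2 = \{\alpha_1, \ldots, \alpha_n\}$ serving as a generating set.

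Next I would handle the finite part. Since $W$ is a finite Coxeter group, it has only finitely many conjugacy classes, so a finite representing set $S_1 = \{h_1, \ldots, h_t\}$ exists, playing the role of the conjugacy-class representatives $\{h_1, \ldots, h_t\}$ of $H$ in Theorem 4.4. Setting $S = S_1 \cup S_2 \cup S_2^{-1}$ then reproduces verbatim the generating set $S = \{h_1, \ldots, h_t, n_1, \ldots, n_k, n_1^{-1}, \ldots, n_k^{-1}\}$ appearing there, under the dictionary $H = W$, $N = T$, $n_j = \alpha_j$, and $k = n$.

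With this matching in place, Theorem 4.4 applies directly and yields both implications at once: if $\rho_1 \cong \rho_2$ then $d(S,\rho_1) = d(S,\rho_2)$ follows by conjugation-invariance of the determinant, and conversely the equality $d(S,\rho_1) = d(S,\rho_2)$ forces $\chi_{\rho_1} = \chi_{\rho_2}$ and hence $\rho_1 \cong \rho_2$. I expect no genuine obstacle inside the argument itself, since it is a pure specialization of an already-proved theorem; the only substantive point — and the one I would be careful to state explicitly — is the verification that the semidirect decomposition $\widetilde{W} = W \ltimes T$ holds with $T$ abelian, normal, and finitely generated \emph{uniformly} across all irreducible affine types. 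Once that classical input is granted, the corollary is immediate.
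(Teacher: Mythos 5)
Your proposal is correct and takes exactly the paper's route: the paper also deduces the corollary as an immediate specialization of Theorem 4.4, using the decomposition $\widetilde{W}=W\ltimes T$ with $T$ the (abelian, normal, finitely generated) root lattice, $S_1$ a set of conjugacy-class representatives of the finite Coxeter group $W$, and $S=S_1\cup S_2\cup S_2^{-1}$ matching the generating set of Theorem 4.4. Your explicit insistence on verifying the semidirect-product structure uniformly across irreducible affine types is precisely the classical input the paper cites and otherwise takes for granted.
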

\begin{rem}
It is well known that  $\widetilde{A_n}=W(A_n) \ltimes T$ where $W(A_n)$ is Coxeter group with respect to Lie algebra $A_n$, and \[W(A_n)=\langle s_1,\cdots, s_n  \lvert (s_i s_j)^{m_{i j}}=1 \rangle ,\]
where
    \[m_{ii}=1 \,\, \text{and} \,\, m_{i j}=3 \,\,\, \text{if} \,\, i \neq j.\]
Taking the irreducible affine Coxeter group $\widetilde{A_n}$ as an example, from \cite[Theorem 6.5]{CS2021} and \cite[Definition 6.1]{CS2021}, we can understand that each element in $A_n$ is conjugate to an element of the form $\delta_1 \cdots \delta_n$, where $\delta_{i} \in \{1,s_i\}$, $i=1,\cdots,n$. Therefore, we can define $S_1$ as the set composed of all distinct $\delta_1 \cdots \delta_n$. Similarly, for the irreducible affine Coxeter groups $\widetilde{B_n}$ and $\widetilde{D_n}$, such an $S_1$ can also be found accordingly.
\end{rem}

        \subsection{The subgroup $G(r,p,n)$ of $G(r,1,n)$ }
	\hspace{1.3em}In fact, we consider the symmetric group $\Sigma_n$ as a subgroup in $GL(n,\mathbb{C}_n)$ as those matrices with just $n $ $1$s and $n(n-1)$ $0$s as their matrix entries. And we extend it to those matrices in $GL(n,\mathbb{C}_n)$ with just $n$ nonzero entries but those nonzero entries have values of $r$th root of $1$, then we obtain the imprimitive group $G(r,1,n)$. Now we present the definition of $G(r,p,n)$.
	
	\begin{defn}Let $r,p,n$ be integers such that $p$ divides $r$. We put $d=r/p$. Then
	  \[G(r,p,n)=\Sigma_n \ltimes A(r,p,n),\]
	\begin{equation*}
		A(r,p,n)=\left\{\left. diag\{\omega_1,\cdots,\omega_n\}
		\right\vert
		\begin{aligned}
			\omega_i^r=1 \ \text{and}\
			(\omega_1 \cdots \omega_n)^{r/p}=1
		\end{aligned}
		\right\}.
	\end{equation*}
	\end{defn}
	\begin{prop}Let \[y_1=diag\{e^{2p\pi i/r},1,\cdots,1\},
		y_2=
		diag\{e^{-2\pi i/r},e^{2\pi i/r},1,\cdots,1\}\]
		\[\cdots,
		y_n=
		diag\{1,\cdots,1,e^{-2\pi i/r},e^{2\pi i/r}\}.
		\]Then $y_1,\cdots,y_n$ belonging to $A(r,p,n)$ and generating $A(r,p,n)$.
	\end{prop}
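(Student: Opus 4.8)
The plan is to first verify membership and then pass to additive ``exponent coordinates,'' where generation becomes a transparent lattice computation. Writing $\zeta = e^{2\pi i/r}$, every element of $A(r,p,n)$ is a diagonal matrix $\mathrm{diag}\{\zeta^{a_1},\ldots,\zeta^{a_n}\}$ with $a_i\in\mathbb{Z}/r\mathbb{Z}$, and the constraint $(\omega_1\cdots\omega_n)^{r/p}=1$ is equivalent to $p\mid(a_1+\cdots+a_n)$; this is well-defined modulo $r$ precisely because $p\mid r$. Thus $A(r,p,n)$ is isomorphic, as an abelian group under matrix multiplication, to $L=\{(a_1,\ldots,a_n)\in(\mathbb{Z}/r\mathbb{Z})^n : p\mid\sum_i a_i\}$, with multiplication of matrices corresponding to addition of exponent vectors. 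Under this identification $y_1$ becomes $(p,0,\ldots,0)$ and, for $2\le i\le n$, the generator $y_i$ becomes the vector with $-1$ in position $i-1$, $+1$ in position $i$, and $0$ elsewhere. A one-line check gives the first assertion: the vector attached to $y_1$ has coordinate sum $p$, while each $y_i$ with $i\ge 2$ has coordinate sum $0$, so all of them lie in $L$.

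For the generation statement I would argue by a direct elimination, reducing an arbitrary $g\leftrightarrow(a_1,\ldots,a_n)\in L$ to the identity by successive multiplication by the $y_i$. Because $y_n$ corresponds to $(0,\ldots,0,-1,1)$, multiplying $g$ by $y_n^{-a_n}$ replaces the exponent vector by $(a_1,\ldots,a_{n-2},a_{n-1}+a_n,0)$, clearing the last coordinate. Repeating with $y_{n-1},\ldots,y_2$ pushes all the mass into the first coordinate, leaving $(S,0,\ldots,0)$ with $S=a_1+\cdots+a_n$. Since $g\in L$ we have $p\mid S$, so $S=pm$ for some integer $m$, and multiplying by $y_1^{-m}$ (which corresponds to $(-pm,0,\ldots,0)$) clears the first coordinate as well. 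Hence $g$ times a product of the generators equals the identity, and as the group is abelian this exhibits $g$ as a product of the $y_i^{\pm 1}$; therefore $\langle y_1,\ldots,y_n\rangle = A(r,p,n)$.

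Equivalently, one may phrase the generation step over $\mathbb{Z}$: the vectors attached to $y_2,\ldots,y_n$ are exactly the simple roots of type $A_{n-1}$ and span the zero-sum sublattice $\{a\in\mathbb{Z}^n:\sum_i a_i=0\}$, and adjoining $(p,0,\ldots,0)$ enlarges their span to $\Lambda=\{a\in\mathbb{Z}^n:p\mid\sum_i a_i\}$. Reducing modulo $r$ then yields $L$, since any representative with entries in $\{0,\ldots,r-1\}$ of an element of $L$ already satisfies $p\mid\sum_i a_i$ and so lifts into $\Lambda$; a surjective homomorphism carries a generating set to a generating set of the image, giving the claim.

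I expect the only genuinely delicate point to be the role of the divisibility $p\mid r$ and the correct reading of ``$p\mid\sum_i a_i$'' modulo $r$. The generators $y_2,\ldots,y_n$ preserve the total exponent sum, so they alone can never leave a fixed sum-class; it is precisely $y_1$, whose exponent is $p$ rather than $1$, that shifts the sum by multiples of $p$ and thereby reaches every class allowed by the defining constraint. Membership forces the exponent of $y_1$ to be a multiple of $p$, while generation forces it to be exactly $p$ so that the two sublattices coincide; keeping this bookkeeping consistent modulo $r$ is the main thing to get right, after which the elimination itself is routine.
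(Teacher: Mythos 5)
Your proof is correct and is essentially the paper's argument in additive disguise: the paper verifies the identity $y=y_1^{k}y_2^{\sum_{j\ge 2}k_j}\cdots y_i^{\sum_{j\ge i}k_j}\cdots y_n^{k_n}$ for an arbitrary $y=\mathrm{diag}\{e^{2k_1\pi i/r},\ldots,e^{2k_n\pi i/r}\}$ with $p\mid\sum_i k_i$, and your elimination by $y_n^{-a_n},y_{n-1}^{-(a_{n-1}+a_n)},\ldots,y_1^{-m}$ produces exactly these telescoping exponents. The lattice reformulation and the remark on reading $p\mid\sum_i a_i$ modulo $r$ are sound but do not change the substance of the argument.
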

	\begin{proof}
		For any element \[y=
		diag\{e^{2k_1 \pi i/r},\cdots,e^{2k_n \pi i/r}\},
		\]
		in $A(r,p,n)$, we have \[0 \le k_i <r \,\, \text{and} \,\, \sum_{i=1}^n k_i=pk, 0 \le k<nr/p.\]
		Now it is easy to verify that $y=y_1^{k}y_2^{\sum_{j=2}^n k_j} \cdots y_i^{\sum_{j=i}^n k_j} \cdots y_n^{k_n}$.
	\end{proof}
	If we let
   \[M_1=
		\begin{pmatrix}
			0              & 1      & 0      & \cdots & 0 \\
			1              & 0      & 0      & \cdots & 0 \\
			0              & 0      & 1      & \cdots & 0 \\
			\vdots         & \vdots & \vdots & \ddots & \vdots \\
			0              & 0      & 0      & \cdots & 1
		\end{pmatrix},\cdots,
		M_{n-1}=
		\begin{pmatrix}
			1              & \cdots & 0      & 0      & 0 \\
			\vdots         & \ddots & \vdots & \vdots & \vdots \\
			0              & \cdots      & 1      & 0 & 0 \\
			0              & \cdots     & 0      & 0 & 1 \\
			0              & \cdots      & 0      & 1 & 0
		\end{pmatrix},
		\]
		then by correlating $M_i$ with $s_i$, we can deduce that $\Sigma_n \cong W(A_{n-1})$, where $W(A_{n-1})$ refers to the mentioned in the Remark 4.7.

	In the previous section, we have already proven that $S_2=\{y_1,\cdots,y_n\}$ can serve as a generating set for $A(r,p,n)$. Let $S_1$ be a set which represents every conjugcy clss of  $\Sigma_n$. By Theorem 4.4, we can immediately derive the following theorem.

	\begin{thm} Let $S=S_1 \bigcup S_2 \bigcup S_2^{-1}$ be the generating set of $G(r,p,n)$. For two finite dimensional complex linear representations $\rho_1$ and $\rho_2$ of $G(r,p,n)$, $d(S,\rho_1)=d(S,\rho_2)$ if and only if $\rho_1\cong \rho_2$.
	\end{thm}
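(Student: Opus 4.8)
The plan is to recognize this statement as a direct specialization of the general Theorem 4.4, so that the proof reduces to checking that the concrete data $\bigl(\Sigma_n, A(r,p,n), S_1, S_2\bigr)$ satisfy the hypotheses of that theorem. By Definition 4.8 we have the semidirect product $G(r,p,n) = \Sigma_n \ltimes A(r,p,n)$, which matches the form $H \ltimes N$ of Theorem 4.4 upon setting $H = \Sigma_n$ and $N = A(r,p,n)$.

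First I would verify that $N = A(r,p,n)$ is abelian: its elements are diagonal matrices, and diagonal matrices commute, so $N$ is abelian exactly as required. Next I would invoke Proposition 4.9, which already establishes that $S_2 = \{y_1,\dots,y_n\}$ generates $A(r,p,n)$; this supplies the generating set $\{n_1,\dots,n_k\}$ of $N$ demanded by Theorem 4.4. Finally, $S_1$ is by construction a set of representatives for the conjugacy classes of the finite group $\Sigma_n$, filling the role of $\{h_1,\dots,h_t\}$; its finiteness is guaranteed since $\Sigma_n \cong W(A_{n-1})$ is a finite Coxeter group with finitely many conjugacy classes.

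With these identifications the generating set $S = S_1 \cup S_2 \cup S_2^{-1}$ is precisely of the shape prescribed by Theorem 4.4, and applying that theorem yields at once that $d(S,\rho_1) = d(S,\rho_2)$ if and only if $\rho_1 \cong \rho_2$. The forward implication is the elementary observation that equivalent representations have conjugate matrices and hence identical characteristic polynomials; the substantive reverse implication is what Theorem 4.4 provides.

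I expect no genuine obstacle here, since all the real work---the conjugacy-class argument that converts each signature-indexed character sum into a single character value, exploiting the abelian structure of $N$ to rewrite a word of the form $\omega_1 h_i \omega_2$ into the canonical shape $h_i n_1^{b_1}\cdots n_k^{b_k}(n_1^{-1})^{c_1}\cdots(n_k^{-1})^{c_k}$---has already been carried out inside the proof of Theorem 4.4. The only care needed is the routine bookkeeping of confirming abelianness of $A(r,p,n)$ and citing Proposition 4.9 for the generators, after which the conclusion is immediate.
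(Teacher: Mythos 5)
Your proposal matches the paper's proof exactly: the paper also obtains this theorem as an immediate application of Theorem 4.4, taking $H=\Sigma_n$, $N=A(r,p,n)$ (abelian, being a group of diagonal matrices), with $S_2$ generating $N$ by Proposition 4.9 and $S_1$ representing the conjugacy classes of $\Sigma_n$. Your verification of the hypotheses is correct and complete, so there is nothing to add.
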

In fact, there is some redundancy in defining the characteristic polynomial with $S$.
\begin{cor}Let $\widetilde{S}=S_1 \bigcup S_2 $ be the generating set of $G(r,p,n)$. For two finite dimensional complex linear representations $\rho_1$ and $\rho_2$ of $G(r,p,n)$, $d(\widetilde{S},\rho_1)=d(\widetilde{S},\rho_2)$ if and only if $\rho_1\cong \rho_2$.
\end{cor}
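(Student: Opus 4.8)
The plan is to re-run the proof of Theorem 4.4 almost verbatim, exploiting one extra feature of the present situation: the normal factor $N=A(r,p,n)$ is \emph{finite}. The reverse implication is the trivial one — if $\rho_1\cong\rho_2$ then $\rho_2(\omega)=X^{-1}\rho_1(\omega)X$ for all $\omega$, and the determinant defining $d(\widetilde S,\rho_i)$ is unchanged under conjugation, so $d(\widetilde S,\rho_1)=d(\widetilde S,\rho_2)$. All the content is in the forward direction. Here $G(r,p,n)=\Sigma_n\ltimes A(r,p,n)$ is exactly an instance of $H\ltimes N$ with $H=\Sigma_n$, $N=A(r,p,n)$, with $S_1$ representing the conjugacy classes of $\Sigma_n$ and $S_2=\{y_1,\dots,y_n\}$ a generating set of $A(r,p,n)$ by Proposition 4.9.

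First I would record the decisive finiteness observation. Each $y_j$ is a diagonal matrix whose entries are $r$-th roots of unity, so $y_j$ has finite order $m_j$ and $y_j^{-1}=y_j^{\,m_j-1}$ is a \emph{nonnegative} power of $y_j$. Consequently every element of $A(r,p,n)$ can be written as a product $y_1^{b_1}\cdots y_n^{b_n}$ with all $b_j\ge 0$, using no letter from $S_2^{-1}$ at all. This is the only place where the hypotheses differ from the general Theorem 4.4, and it is what licenses deleting $S_2^{-1}$ from the generating set.

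Then I would carry the counting argument through over $M(\widetilde S)$. Starting from $d(\widetilde S,\rho_1)=d(\widetilde S,\rho_2)$, the spectral-mapping argument of Lemma 4.3 — which only uses that $\widetilde S$ is a finite subset of $G(r,p,n)$ — gives $\sum_{sig(\omega)=\alpha}\chi_{\rho_1}(\omega)=\sum_{sig(\omega)=\alpha}\chi_{\rho_2}(\omega)$ for every $\alpha\in\mathbb{N}^{|\widetilde S|}$. Fix $\omega\in G(r,p,n)$; as in Theorem 4.4 its image in $G/N\cong\Sigma_n$ is conjugate to some $h_iN$, so $\omega$ is conjugate to $h_i\,(uvu^{-1})$ with $uvu^{-1}\in N$. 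By the finiteness observation I may write $uvu^{-1}=y_1^{b_1}\cdots y_n^{b_n}$ with $b_j\ge 0$, so $\omega$ is conjugate to $h_i y_1^{b_1}\cdots y_n^{b_n}$. Put $\alpha=sig(h_i y_1^{b_1}\cdots y_n^{b_n})$ and $E_\alpha=\{u\in M(\widetilde S):sig(u)=\alpha\}$. Because $\alpha$ contains exactly one letter from $S_1$, namely $h_i$, every $u\in E_\alpha$ has the form $\omega_1 h_i\omega_2$ with $\omega_1,\omega_2\in M(S_2)$; cyclic conjugation gives $\omega_1 h_i\omega_2\sim h_i\omega_2\omega_1$, and since $N$ is abelian $h_i\omega_2\omega_1=h_i y_1^{b_1}\cdots y_n^{b_n}$ in $G$. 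Hence every word of $E_\alpha$ is conjugate to $\omega$, the summed identity collapses to $|E_\alpha|\,\chi_{\rho_1}(\omega)=|E_\alpha|\,\chi_{\rho_2}(\omega)$, and so $\chi_{\rho_1}(\omega)=\chi_{\rho_2}(\omega)$. As $\omega$ was arbitrary, $\chi_{\rho_1}=\chi_{\rho_2}$ and therefore $\rho_1\cong\rho_2$.

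The genuinely delicate point — the one I would state explicitly — is \emph{why} the inverse generators may be dropped here when they could not be dropped in the generality of Theorem 4.4. The answer is precisely the finiteness of $N=A(r,p,n)$: for an infinite abelian normal factor, such as the root lattice $T$ in the affine Coxeter case of Corollary 4.6, the element $uvu^{-1}$ genuinely requires negative exponents, and there is then no word in $M(S_1\cup S_2)$ of the prescribed signature realizing it, so the collapse of $E_\alpha$ to a single conjugacy class fails. I would also verify the bookkeeping claim that a signature $\alpha$ arising from $h_i y_1^{b_1}\cdots y_n^{b_n}$ forces exactly one $S_1$-letter in each word of $E_\alpha$; this is immediate from the definition of $sig$ once one notes that the distinct $h_j$ are treated as distinct letters of the alphabet $\widetilde S$.
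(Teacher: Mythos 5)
Your proposal is correct and follows essentially the same route as the paper: the paper's own proof consists precisely of the observation that each $y_i$ has finite order, so $y_i^{-1}$ is a (nonnegative) power of $y_i$, followed by rerunning the argument of Theorem 4.4 with $\widetilde{S}=S_1\cup S_2$. Your write-up simply makes explicit the counting over $E_\alpha$ and the role of the finiteness of $A(r,p,n)$ that the paper compresses into ``a proof similar to that of Theorem 4.4.''
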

    \begin{proof}Since the generators $y_i$ in $A(r,p,n)$ are of finite order, the elements $y_i^{-1}$ can actually be expressed as powers of $y_i$. Therefore, we can define the characteristic polynomial of $G(r,p,n)$ using $\widetilde{S}=S_1\bigcup S_2$, and a proof similar to that of Theorem 4.4 can be used to prove the corollary.
    \end{proof}
    \begin{rem}Here, we can also identify a specific $S_1$. Due to $\Sigma_n \cong W(A_{n-1})$ and Remark 4.7, we can define $S_1$ as the set composed of all distinct $\delta_1 \cdots \delta_{n-1}$, where $\delta_i \in \{1,M_i\}$ and $i=1,\cdots,n-1$.
    \end{rem}
	We find that when $p=1$, it is the group $G(r,1,n)$ discussed in the previous chapter. However, the generators here are different from those in the previous section. Although one can quickly prove the conclusions of the previous section using the generators of this section, the generators used in the previous section are more standard and have greater value.
	\begin{rem}
	For complex reflection groups, the characteristic polynomials vary depending on the choice of bases. Consequently, a pertinent question arises: how many generating elements are required to fully capture the  information of representations of the imprimitive group? The representations theory of groups is intimately connected with algebraic geometry. This paper establishes the correspondence between the characteristic polynomial and the representations of imprimitive group, raising the possibility of deriving unexpected results from an algebraic geometry perspective. Further exploration will reveal these outcomes.
		\end{rem}

	Chenyue Feng\\
	Email: fcy1572565124@163.com\\
	School of Mathematics, Shandong University\\
	Shanda Nanlu 27, Jinan, \\
	Shandong Province, China\\
	Postcode: 250100\\
	Shoumin Liu\\
	Email: s.liu@sdu.edu.cn\\
	School of Mathematics, Shandong University\\
	Shanda Nanlu 27, Jinan, \\
	Shandong Province, China\\
	Postcode: 250100\\
	Xumin Wang\\
	Email: 202320303@mail.sdu.edu.cn\\
	School of Mathematics, Shandong University\\
	Shanda Nanlu 27, Jinan, \\
	Shandong Province, China\\
	Postcode: 250100
	
\end{document}